\newtheorem{thm}{Theorem}
\newtheorem{lem}[thm]{Lemma}
\newtheorem{cor}[thm]{Corollary}
\newtheorem{prop}[thm]{Proposition}
\newtheorem{rem}[thm]{Remark}
\newtheorem{defi}[thm]{Definition}
\newcommand{\R}{{\mathbb R}}
\def\eps{\varepsilon}
\def\twoscale{\stackrel{2-scale}{\relbar\joinrel\relbar\joinrel\relbar\joinrel\relbar\joinrel\relbar\joinrel\rightharpoonup}}
\begin{document}

\title[Heterogeneity and strong competition in ecology]{Heterogeneity and strong competition in ecology}

\author[Harsha Hutridurga]{Harsha Hutridurga}
\address{H.H.: Department of Mathematics, Imperial College London, London, SW7 2AZ, United Kingdom.}
\email{h.hutridurga-ramaiah@imperial.ac.uk}

\author[Chandrasekhar Venkataraman]{Chandrasekhar Venkataraman}
\address{C.V.: School of Mathematics and Statistics, North Haugh, St Andrews KY16 9SS, United Kingdom.} 
\email{cv28@st-andrews.ac.uk}

\date{\today}

\begin{abstract}
We study a competition-diffusion model while performing simultaneous homogenization and strong competition limits. The limit problem is shown to be a Stefan type evolution equation with effective coefficients. We also perform some numerical simulations in one and two spatial dimensions that suggest that oscillations are detrimental to invasion behaviour of the species.
\end{abstract}

\maketitle

{{\bf Keywords: }\emph{Homogenization, Equations in media with periodic structures, Asymptotic analysis, Stefan problems, Finite element methods, Ecology and invasion.}}

\setcounter{tocdepth}{1}
\tableofcontents

\thispagestyle{empty}

\section{Introduction}
This article attempts to understand the effect of rapid oscillations in the diffusion coefficients in strong competition limits of models from theoretical ecology. In particular, the current work provides a starting point towards understanding the role played by heterogeneous mobility on invasion behaviour in  models for competition between two motile species.  For analytical tractability, we consider only periodic oscillations in the diffusivities. Take $Y:=[0,1)^d$ to be the unit reference periodicity cell. Consider bounded positive definite matrices $A, B\in\mathrm L^\infty(Y;\R^{d\times d})$, i.e., there exists a positive constant $\beta$ such that
\begin{align*}
\zeta^\top\!\! A(y) \zeta \ge \beta \left\vert \zeta \right\vert^2,
\qquad
\zeta^\top\!\! B(y) \zeta \ge \beta \left\vert \zeta \right\vert^2
\quad
\mbox{ for a.e. }y\in Y
\mbox{ and }\forall\zeta\in\R^d,
\end{align*}
where we use $\cdot^{\top}$ to denote the transpose. As is classical in the periodic homogenization setting, we extend $A$ and $B$ to the full-space by $Y$-periodicity, i.e.,
\begin{equation*}
\begin{array}{cc}
A(x) = A(x+k\, {\bf e}_i)
\\[0.2 cm]
B(x) = B(x+k\, {\bf e}_i)
\end{array}
\quad
\mbox{ for a.e. }x\in \R^d
\mbox{ and }\forall k\in\mathbb{Z}^d
\quad \forall i\in\{1,\cdots,d\},
\end{equation*}
with $\{{\bf e}_i\}_{i=1}^d$ denoting the canonical basis in $\R^d$. Taking $0<\eps\ll 1$ to be the heterogeneity length scale, we define highly oscillating diffusion coefficients as
\begin{align*}
A^\eps(x) := A\left(\frac{x}{\eps}\right);
\qquad
B^\eps(x) := B\left(\frac{x}{\eps}\right)
\qquad
\mbox{ for }x\in\Omega.
\end{align*}
Note that smaller the parameter $\eps$ is, greater will be the oscillations in $A^\eps$ and $B^\eps$ defined above. We would like to understand the dynamics of competition-diffusion in the context of two populations, say $\mathcal{S}_1$ and $\mathcal{S}_2$, whose respective population densities are denoted as $u^\eps(t,x), v^\eps(t,x)$. Note that the integral 
\begin{align*}
\int\limits_{\mathcal{B}} u^\eps(t,x)\, {\rm d}x
\end{align*} 
represents the number of $\mathcal{S}_1$ individuals in a region $\mathcal{B}\subset\Omega$ at any given instant $t>0$. Similar interpretation holds for the population density $v^\eps(t,x)$. For the above two population densities and for a non-negative function $w^\eps(t,x)$, we consider a competition-diffusion system with heterogeneous diffusivities
\begin{equation}\label{eq:model-eps}
\begin{aligned}
\partial_t u^\eps - \nabla \cdot \Big( A^\eps(x) \nabla u^\eps \Big) + \frac{u^\eps}{\eps} \Big( v^\eps + \lambda \left( 1 - w^\eps \right) \Big) & = 0 & \mbox{ in }(0,\ell)\times\Omega,
\\[0.2 cm]
\partial_t v^\eps - \nabla \cdot \Big( B^\eps(x) \nabla v^\eps \Big) + \alpha \frac{v^\eps}{\eps} \Big( u^\eps + \lambda w^\eps \Big) & = 0 & \mbox{ in }(0,\ell)\times\Omega,
\\[0.2 cm]
\partial_t w^\eps + \frac{u^\eps}{\eps} \left( w^\eps - 1 \right) + \frac{w^\eps v^\eps}{\eps} & = 0 & \mbox{ in }(0,\ell)\times\Omega.
\end{aligned}
\end{equation}
The interspecific competition rates between $u^\eps$ and $v^\eps$ in the above evolution is of $\mathcal{O}(\eps^{-1})$. Here the positive parameter $\alpha$ governs the relative competitive strength of the two species, if $\alpha>1$, $\mathcal{S}_1$ has a competitive advantage over $\mathcal{S}_2$ whilst if $\alpha<1$, $\mathcal{S}_2$ has a competitive advantage over $\mathcal{S}_1$. The unknown $w^\eps(t,x)$ may be thought of as an approximation of the characteristic function of the population density for the species $\mathcal{S}_1$, similarly $(1-w^\eps(t,x))$ may be thought of as an approximation of the characteristic function of the population density for the species $\mathcal{S}_2$.  This interpretation allows an ecological interpretation of the reaction terms involving  $w^\eps(t,x)$ as representing the cost to a species of converting the habitat of the other species into its own habitat \cite{Hilhorst_2001}. The parameter $\lambda\geq 0$ governs the relative strength of this cost to direct interspecies competition effects, and we note that our results remain valid in the case $\lambda=0$ which corresponds to no cost of habitat conversion.  
The evolution system \eqref{eq:model-eps} for $(u^\eps,v^\eps,w^\eps)$ is supplemented by initial and boundary conditions
\begin{equation}\label{eq:model-ibvp}
\begin{aligned}
u^\eps(0,x) = u^{\rm in}(x),\, v^\eps(0,x) = v^{\rm in}(x),\, w^\eps(0,x) &= w^{\rm in}(x)  & \mbox{ in }\Omega
\\[0.2 cm]
A^\eps(x) \nabla u^\eps \cdot {\bf n}(x) = B^\eps(x) \nabla v^\eps \cdot {\bf n}(x) & = 0 & \mbox{ on }(0,\ell)\times\partial\Omega.
\end{aligned}
\end{equation}
Here ${\bf n}(x)$ is the unit exterior normal to $\Omega$ at $x\in\partial\Omega$. The initial data are assumed to be non-negative and bounded in $\mathrm L^\infty$, i.e.,
\begin{align}\label{eq:initial-data}
0\le u^{\rm in} \le u_{\rm max} <\infty,
\quad
0\le v^{\rm in} \le v_{\rm max} <\infty,
\quad
0\le w^{\rm in} \le 1.
\end{align}
The competition-diffusion model \eqref{eq:model-eps}-\eqref{eq:model-ibvp} is essentially the evolution model considered in \cite{Hilhorst_2001} except for the high frequency oscillations in the diffusion coefficients.\\
The present work deals with the asymptotic analysis of the coupled system \eqref{eq:model-eps}-\eqref{eq:model-ibvp} in the $\eps\to0$ limit and in the $t\gg1$ regime. The $\eps\to0$ limit procedure corresponds to performing both the homogenization and the strong competition limit simultaneously. The study of strong competition limits for such systems with constant diffusivities are found in \cite{DANCER_1999, Hilhorst_2001}. The novelty of this work is to consider the effect of having highly oscillating diffusivities on the strong competition limit. We employ the method of two-scale convergence to address the periodic homogenization problem. Our main result is Theorem \ref{thm:homogen} which says that the solution family $(u^\eps, v^\eps, w^\eps)$ to \eqref{eq:model-eps}-\eqref{eq:model-ibvp} has a limit point $(u^\ast, v^\ast, w^\ast)$ in certain weak topology. The theorem further characterises the limit point as a solution to a certain Stefan type problem. The study of the long time behaviour of such models is treated numerically where we have performed numerical simulations to make some interesting observations on the so-called competitive Lotka-Volterra system in theoretical ecology. 

The organization of  this paper is as follows. In section \ref{sec:prelim-analysis}, we briefly recall the existence and uniqueness theory for the competition-diffusion model \eqref{eq:model-eps}-\eqref{eq:model-ibvp} and gather some quantitative estimates on the solution family. Section \ref{sec:two-scale} deals with our main result and its proof. The definition of two-scale convergence and associated compactness results are recalled in section \ref{sec:two-scale}. Our numerical results with emphasis on theoretical ecology are given in section \ref{sec:ecology}. Eventually, section \ref{sec:conclude} proposes a more general setting where we consider multiple scales in the competition-diffusion systems. Some interesting observations from the periodic homogenization theory are gathered in Appendix \ref{sec:one-dim-layer-material}.\\

{\bf Acknowledgement: }
The research of the first author was supported by the EPSRC programme grant ``Mathematical fundamentals of Metamaterials for multiscale Physics and Mechanic'' (EP/L024926/1). The second author would like to thank the Isaac Newton Institute for Mathematical Sciences for support and hospitality during the program ``Coupling geometric PDEs with physics for cell morphology, motility and pattern formation'' where this project was initiated. Authors further thank the kind hospitality of the Hausdorff Institute for Mathematics in Bonn during the trimester program on multiscale problems in 2017 where some of this work was in progress. The authors would like to thank Henrik Shahgholian for his fruitful suggestions during the preparation of this article.

\section{Existence analysis and preliminary estimates}\label{sec:prelim-analysis}
The well-posedness of the initial boundary value problem \eqref{eq:model-eps}-\eqref{eq:model-ibvp} is a bit subtle. We cannot straightaway deduce the existence and uniqueness of solutions from the existence theory of reaction-diffusion systems. The reason being the absence of diffusion for $w^\eps(t,x)$ in the evolution \eqref{eq:model-eps}. We borrow the associated well-posedness result from \cite[Lemma 2.2, page 167]{Hilhorst_2001} (see \cite[section 5, pp.~178--180]{Hilhorst_2001} for a sketch of proof).
\begin{prop}\label{prop:well-posed}
Suppose the initial data satisfy \eqref{eq:initial-data}. There exists a positive time $\ell>0$ such that, for each $\eps>0$, the competition-diffusion system \eqref{eq:model-eps}-\eqref{eq:model-ibvp} possesses a unique classical solution $(u^\eps,v^\eps,w^\eps)$ in $(0,\ell)\times\Omega$.
\end{prop}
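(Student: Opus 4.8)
The plan is to fix $\eps>0$ throughout and construct the solution by a fixed-point argument, using that for fixed $\eps$ the factors $\eps^{-1}$ are merely bounded constants. The structural obstruction, as the authors note, is that the third equation in \eqref{eq:model-eps} carries no diffusion, so the triple $(u^\eps,v^\eps,w^\eps)$ cannot be treated by the standard theory for parabolic systems. Instead I would decouple the problem by regarding the $w^\eps$-equation as a pointwise-in-$x$ linear ordinary differential equation in time.

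First I would eliminate $w^\eps$. Rewriting the third equation as $\partial_t w^\eps = -\eps^{-1}\big((u^\eps+v^\eps)\,w^\eps - u^\eps\big)$ and solving with the integrating factor, for $u^\eps,v^\eps\ge 0$ one obtains the explicit representation
\begin{equation*}
w^\eps(t,x) = w^{\rm in}(x)\, e^{-\frac1\eps\int_0^t (u^\eps+v^\eps)\,{\rm d}s} + \frac1\eps\int_0^t u^\eps(s,x)\, e^{-\frac1\eps\int_s^t (u^\eps+v^\eps)\,{\rm d}\tau}\,{\rm d}s.
\end{equation*}
Since the exponential kernels are non-negative and bounded by one and $0\le w^{\rm in}\le 1$, a direct estimate, equivalently the invariant-region observation that $\partial_t w^\eps\le 0$ when $w^\eps=1$ and $\partial_t w^\eps\ge 0$ when $w^\eps=0$, yields $0\le w^\eps\le 1$. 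The induced map $(u^\eps,v^\eps)\mapsto w^\eps$ is Lipschitz on bounded sets, which is precisely what the contraction step requires.

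Next I would set up the fixed-point map $\Phi:(\bar u,\bar v)\mapsto(u,v)$: given non-negative bounded $(\bar u,\bar v)$, compute $\bar w$ from the formula above and then solve the two linear, uniformly parabolic problems
\begin{equation*}
\partial_t u - \nabla\cdot(A^\eps\nabla u) + \tfrac{1}{\eps}\big(\bar v + \lambda(1-\bar w)\big)\,u = 0, \qquad \partial_t v - \nabla\cdot(B^\eps\nabla v) + \tfrac{\alpha}{\eps}\big(\bar u + \lambda\bar w\big)\,v = 0,
\end{equation*}
subject to the homogeneous conormal boundary conditions and initial data in \eqref{eq:model-ibvp}. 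Because $A,B$ are bounded and uniformly positive definite and the zeroth-order coefficients are bounded and non-negative, standard linear parabolic theory (Galerkin with energy estimates, or semigroup methods) provides unique solutions, and the maximum principle delivers the invariant bounds $0\le u\le u_{\rm max}$, $0\le v\le v_{\rm max}$ of \eqref{eq:initial-data}: the reaction terms act as non-negative sinks, so the sup-norm cannot grow, while non-negativity is preserved since the reaction vanishes where the density vanishes. Hence $\Phi$ maps the closed convex set of such bounded non-negative pairs into itself. A Gronwall-type estimate on the differences, using the boundedness of the frozen coefficients and the Lipschitz dependence of $\bar w$ on $(\bar u,\bar v)$, shows that $\Phi$ is a contraction in $C([0,\ell];\mathrm L^2(\Omega))$ once $\ell>0$ is taken small; the Banach fixed point theorem then furnishes a unique $(u^\eps,v^\eps)$, and with it $w^\eps$, solving \eqref{eq:model-eps}-\eqref{eq:model-ibvp}.

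I expect the main difficulty to lie in the upgrade to a \emph{classical} solution, exactly because of the missing diffusion in $w^\eps$. The densities $u^\eps,v^\eps$ gain smoothness from interior parabolic regularity, but $w^\eps$ has no independent smoothing mechanism: its spatial regularity is inherited solely from that of $u^\eps,v^\eps$ and of $w^{\rm in}$ through the integral representation. One must therefore bootstrap with care, differentiating the representation formula in $x$ and feeding the resulting bounds back into the parabolic estimates, under correspondingly smooth data (and, for genuine classical regularity, smooth diffusion matrices $A$ and $B$). This circular dependence between the regularity of $(u^\eps,v^\eps)$ and that of $w^\eps$ is the delicate point, and it is precisely the issue resolved in \cite{Hilhorst_2001}, from which the statement is borrowed.
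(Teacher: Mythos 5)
Your proposal follows essentially the same route as the paper's proof: the paper does not prove Proposition~\ref{prop:well-posed} itself but borrows it from \cite[Lemma 2.2]{Hilhorst_2001}, whose sketched proof is exactly a contraction-mapping argument on a sufficiently short time interval $(0,\ell)$, with the non-diffusive unknown $w^\eps$ eliminated by explicitly integrating its pointwise-in-$x$ linear ODE --- precisely your scheme of freezing $(\bar u,\bar v)$, solving the two linear parabolic problems, and contracting for small $\ell$. Your honest flagging of the upgrade to \emph{classical} regularity as the delicate step (requiring a bootstrap through the integral representation of $w^\eps$, and in fact smoothness of $A,B$ beyond the paper's $\mathrm L^\infty$ hypotheses) is consistent with the level of detail the paper provides, which defers exactly this point to \cite{Hilhorst_2001}.
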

\begin{rem}
The result as stated in \cite{Hilhorst_2001} does not speak about the global-in-time existence of solutions to the competition diffusion model. The proof of existence-uniqueness in \cite{Hilhorst_2001} uses the contraction mapping principle. In this approach, they choose a final time $\ell$ -- see \cite[page 180]{Hilhorst_2001}.\\
Even though there is no mention (in \cite{Hilhorst_2001}) of extending the interval of existence in time to $(0,\infty)$, we could argue as in reaction-diffusion theories \cite[Lemma 1.1, page 420]{Pierre_2010}. For example, as we have uniform (in time) $\mathrm L^\infty$ bounds on the solution $(u^\eps,v^\eps,w^\eps)$ -- see Lemma \ref{lem:maxprinc}, we can deduce global-in-time existence as there is no possibility of blow-up in $\mathrm L^\infty$-norm.
\end{rem}
Next, we record a result on a conserved quantity for the competition-diffusion model \eqref{eq:model-eps}-\eqref{eq:model-ibvp}.
\begin{lem}\label{lem:conserve-quantity}
Suppose the initial data satisfy \eqref{eq:initial-data}. Then for each $t\in(0,\infty)$, we have
\begin{align*}
\int\limits_\Omega \left( u^\eps - \frac{v^\eps}{\alpha} + \lambda w^\eps \right)(t,x)\, {\rm d}x 
= \int\limits_\Omega \left( u^{\rm in} - \frac{v^{\rm in}}{\alpha} + \lambda w^{\rm in} \right)(x)\, {\rm d}x
\end{align*}
for each $\eps>0$.
\end{lem}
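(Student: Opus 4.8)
The plan is to construct the claimed conserved quantity by taking a suitable linear combination of the three equations in the system \eqref{eq:model-eps}, integrating over the spatial domain $\Omega$, and showing that the $\mathcal{O}(\eps^{-1})$ reaction terms cancel exactly while the diffusion terms vanish by the no-flux boundary conditions. Concretely, I would multiply the first equation by $1$, the second by $-\tfrac{1}{\alpha}$, and the third by $\lambda$, then add them. Since Proposition \ref{prop:well-posed} guarantees a classical solution, all derivatives and integrals below are justified.

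First I would examine the reaction terms. From the first equation the reaction is $\tfrac{u^\eps}{\eps}(v^\eps + \lambda(1-w^\eps))$. From the second equation, multiplied by $-\tfrac{1}{\alpha}$, the reaction becomes $-\tfrac{v^\eps}{\eps}(u^\eps + \lambda w^\eps)$. From the third equation, multiplied by $\lambda$, the reaction is $\lambda\tfrac{u^\eps}{\eps}(w^\eps-1) + \lambda\tfrac{w^\eps v^\eps}{\eps}$. Adding these, I would verify that the $u^\eps v^\eps$ terms cancel ($+\tfrac{u^\eps v^\eps}{\eps}$ against $-\tfrac{u^\eps v^\eps}{\eps}$), the $\lambda u^\eps(1-w^\eps)$ term cancels against $\lambda u^\eps(w^\eps - 1)$, and the $-\lambda\tfrac{v^\eps w^\eps}{\eps}$ term cancels against $+\lambda\tfrac{w^\eps v^\eps}{\eps}$. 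Thus the entire $\mathcal{O}(\eps^{-1})$ contribution vanishes pointwise — this algebraic cancellation is the crux of the argument and explains the specific weights $1, -\tfrac1\alpha, \lambda$ in the conserved functional.

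Next I would handle the remaining terms. The time-derivative terms combine into $\partial_t\bigl(u^\eps - \tfrac{v^\eps}{\alpha} + \lambda w^\eps\bigr)$. Integrating over $\Omega$ and using the regularity from Proposition \ref{prop:well-posed} to exchange $\partial_t$ with the spatial integral, I obtain $\tfrac{\mathrm d}{\mathrm dt}\int_\Omega\bigl(u^\eps - \tfrac{v^\eps}{\alpha} + \lambda w^\eps\bigr)\,\mathrm dx$. The diffusion terms $-\nabla\cdot(A^\eps\nabla u^\eps)$ and $+\tfrac1\alpha\nabla\cdot(B^\eps\nabla v^\eps)$, upon integration over $\Omega$, reduce via the divergence theorem to boundary flux integrals $\int_{\partial\Omega} A^\eps\nabla u^\eps\cdot\mathbf{n}\,\mathrm dS$ and $-\tfrac1\alpha\int_{\partial\Omega} B^\eps\nabla v^\eps\cdot\mathbf{n}\,\mathrm dS$, both of which vanish by the no-flux boundary conditions in \eqref{eq:model-ibvp}. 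Note that the third equation carries no diffusion, so it contributes nothing here.

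**The conclusion** is then immediate: the time derivative of the spatial integral of $u^\eps - \tfrac{v^\eps}{\alpha} + \lambda w^\eps$ is zero, so this quantity is constant in time and equals its value at $t=0$, which is the right-hand side of the stated identity by the initial conditions in \eqref{eq:model-ibvp}. I do not anticipate any serious obstacle: the proof is essentially the verification of an exact pointwise cancellation, and the only technical points — differentiating under the integral sign and applying the divergence theorem — are fully justified by the classical regularity of the solution. The one subtlety worth stating carefully is that the choice of coefficients is forced precisely by the requirement that all singular $\eps^{-1}$ terms annihilate one another.
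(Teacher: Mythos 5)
Your proposal is correct and is essentially identical to the paper's own proof: the paper likewise forms the combination $u^\eps - \tfrac{v^\eps}{\alpha} + \lambda w^\eps$, observes that the $\mathcal{O}(\eps^{-1})$ reaction terms cancel so that $\partial_t\bigl(u^\eps - \tfrac{v^\eps}{\alpha} + \lambda w^\eps\bigr)$ equals a sum of divergence terms, integrates over $\Omega$, and invokes the zero-flux boundary conditions. You spell out the pointwise cancellation more explicitly than the paper does (which is a virtue, not a deviation), and your justification of differentiating under the integral via the classical regularity from Proposition \ref{prop:well-posed} matches the paper's implicit use of the same fact.
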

\begin{proof}
Note that from the competition-diffusion system \eqref{eq:model-eps}, we have
\begin{align*}
\partial_t \left( u^\eps - \frac{v^\eps}{\alpha} + \lambda w^\eps \right)
=
\nabla \cdot \Big( A^\eps(x) \nabla u^\eps \Big)
- \nabla \cdot \Big( B^\eps(x) \nabla v^\eps \Big).
\end{align*}
Integrating the above equality over $\Omega$ and employing the zero-flux boundary conditions from \eqref{eq:model-ibvp}, we arrive at
\begin{align*}
\frac{{\rm d}}{{\rm d}t} \int\limits_\Omega \left( u^\eps - \frac{v^\eps}{\alpha} + \lambda w^\eps \right)(t,x)\, {\rm d}x = 0
\end{align*}
which when integrated in the time variable yields the result.
\end{proof}

We record a result, thanks to the Maximum principle (see \cite[Lemma 2.3, page 168]{Hilhorst_2001}).
\begin{lem}\label{lem:maxprinc}
Suppose the initial data satisfy \eqref{eq:initial-data}. Then the solution $(u^\eps,v^\eps,w^\eps)$ to the competition-diffusion model \eqref{eq:model-eps}-\eqref{eq:model-ibvp} satisfies
\begin{align*}
0\le u^\eps(t,x) \le u_{\rm max},
\quad
0\le v^\eps(t,x) \le v_{\rm max},
\quad
0\le w^\eps(t,x) \le 1
\end{align*}
for each $\eps>0$ and for all $(t,x)\in(0,\ell)\times\Omega$.
\end{lem}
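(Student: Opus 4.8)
The plan is to establish that the box $\mathcal R := \{(u,v,w) : u\ge 0,\ v\ge 0,\ 0\le w\le 1\}$ is left invariant by the flow of \eqref{eq:model-eps}--\eqref{eq:model-ibvp}, which is exactly the content of the lemma. Rather than invoking an abstract invariant-region theorem (awkward here because the $w^\eps$-equation carries no diffusion), I would argue by the energy/truncation method, testing each equation against the positive part of the amount by which the corresponding constraint is violated. The guiding observation is that, on the portion of $\partial\mathcal R$ where a given constraint is active, the reaction term in the corresponding equation has a favourable sign: where $u^\eps=0$ the first equation loses its reaction term, where $v^\eps=0$ the second does, where $w^\eps=0$ the third gives $\partial_t w^\eps = u^\eps/\eps\ge 0$, and where $w^\eps=1$ it gives $\partial_t w^\eps = -v^\eps/\eps\le 0$.

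Concretely, for the lower bound on $u^\eps$ I would use the Lipschitz function $(u^\eps)_-:=\max\{-u^\eps,0\}\in \mathrm H^1(\Omega)$ as a test function in the weak formulation of the first equation. The parabolic term produces $-\tfrac12\tfrac{{\rm d}}{{\rm d}t}\|(u^\eps)_-\|_{\mathrm L^2(\Omega)}^2$, the zero-flux condition \eqref{eq:model-ibvp} annihilates the boundary integral from integration by parts, and uniform ellipticity (the constant $\beta$) makes the diffusion contribution have a definite sign. The reaction term contributes $-\eps^{-1}\!\int_{\{u^\eps<0\}} (u^\eps)_-^2\,(v^\eps+\lambda(1-w^\eps))$, which is non-positive precisely when $v^\eps\ge 0$ and $w^\eps\le 1$. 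An identical computation handles $(v^\eps)_-$, while the bounds $0\le w^\eps\le 1$ follow by testing the diffusion-free (hence pointwise-in-$x$) third equation against $(w^\eps)_-$ and $(w^\eps-1)_+$; the structure $\partial_t w^\eps + \eps^{-1}(u^\eps+v^\eps)w^\eps = \eps^{-1}u^\eps$ makes these one-dimensional comparisons immediate once $u^\eps,v^\eps\ge 0$.

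The one genuine obstacle is the circular nature of these sign conditions: non-negativity of $u^\eps$ was derived assuming $v^\eps\ge 0,\ w^\eps\le 1$, and conversely. I would break the circularity by adding the four identities above into the single functional
\[
E(t):=\tfrac12\Big(\|(u^\eps)_-\|_{\mathrm L^2}^2+\|(v^\eps)_-\|_{\mathrm L^2}^2+\|(w^\eps)_-\|_{\mathrm L^2}^2+\|(w^\eps-1)_+\|_{\mathrm L^2}^2\Big)(t).
\]
Each \emph{a priori} indefinite cross term (for instance $\eps^{-1}\!\int (u^\eps)_-^2\,(v^\eps)_-$) is cubic in the truncated quantities; since Proposition \ref{prop:well-posed} already furnishes a classical, hence bounded, solution on $[0,\ell]\times\overline\Omega$, such a term is controlled by $C_\eps\,E(t)$ for a finite (possibly $\eps$-dependent) constant. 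This yields $\tfrac{{\rm d}}{{\rm d}t}E\le C_\eps E$ with $E(0)=0$, and Gronwall's lemma forces $E\equiv 0$, i.e. $u^\eps,v^\eps\ge 0$ and $0\le w^\eps\le 1$ simultaneously. With the lower bounds in hand the sign conditions are no longer hypothetical, and the upper bounds follow cleanly: testing the first equation with $(u^\eps-u_{\rm max})_+$ and the second with $(v^\eps-v_{\rm max})_+$, the reaction terms are now non-negative on the respective exceedance sets, so $\|(u^\eps-u_{\rm max})_+\|_{\mathrm L^2}^2$ and $\|(v^\eps-v_{\rm max})_+\|_{\mathrm L^2}^2$ are non-increasing and vanish initially, giving $u^\eps\le u_{\rm max}$ and $v^\eps\le v_{\rm max}$. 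Equivalently, once the lower bounds are known the constants $u_{\rm max}$ and $v_{\rm max}$ are supersolutions of the respective scalar parabolic equations and the standard comparison principle applies.
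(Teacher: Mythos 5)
Your argument is correct, but it is a genuinely different route from the paper's: the paper does not prove Lemma \ref{lem:maxprinc} at all, importing it from \cite[Lemma 2.3, page 168]{Hilhorst_2001}, where the bounds are obtained by maximum-principle/comparison arguments for the classical solution constructed there. Your Stampacchia truncation scheme is a self-contained alternative, and the computations check out: testing with $(u^\eps)_-$ gives $\tfrac12\tfrac{\rm d}{{\rm d}t}\|(u^\eps)_-\|_{\mathrm L^2}^2 \le -\eps^{-1}\!\int (u^\eps)_-^2\big(v^\eps+\lambda(1-w^\eps)\big)$ after the ellipticity and zero-flux terms are discarded; the decompositions $v^\eps\ge -(v^\eps)_-$ and $1-w^\eps\ge -(w^\eps-1)_+$ leave only cubic cross terms; the rewriting $\partial_t w^\eps+\eps^{-1}(u^\eps+v^\eps)w^\eps=\eps^{-1}u^\eps$ is exact; and the combined functional $E$ with Gronwall is the standard and correct device for breaking the circularity among the four sign conditions, with the upper bounds then following from the (now legitimate) non-negativity of the reaction terms on the exceedance sets. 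What each approach buys: the comparison route of \cite{Hilhorst_2001} is shorter and yields pointwise bounds directly, but pointwise maximum principles sit uneasily with the paper's standing assumption that $A,B$ are merely $\mathrm L^\infty$ and positive definite, whereas your energy method uses only the ellipticity constant $\beta$ and the weak formulation, so it survives for weak solutions and handles the diffusion-free $w^\eps$-equation without any regularity fuss. One point you should tighten: your Gronwall constant $C_\eps$ rests on the claim that the solution of Proposition \ref{prop:well-posed} is ``classical, hence bounded'' on $[0,\ell]\times\overline\Omega$; classicality on the open cylinder alone does not give this, so you should say explicitly that the contraction-mapping construction in \cite{Hilhorst_2001} produces the solution in a ball of a sup-norm space, which furnishes the a priori $\mathrm L^\infty$ bound (possibly $\eps$-dependent, which is harmless since the lemma is asserted for each fixed $\eps$).
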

The following result gives a priori bounds on the solutions to \eqref{eq:model-eps} via energy method (see \cite[Lemma 2.4, pp. ~168--169]{Hilhorst_2001} for proof).
\begin{prop}[a priori estimates]\label{prop:apriori}
Suppose the initial data satisfy \eqref{eq:initial-data}. Let $(u^\eps,v^\eps,w^\eps)$ be the solution to the competition model \eqref{eq:model-eps}. Then we have
\begin{align}\label{eq:aprioi-nabla-u-v}
\left\Vert \nabla u^\eps \right\Vert_{\mathrm L^2 ((0,\ell)\times\Omega)} \le C;
\qquad
\left\Vert \nabla v^\eps \right\Vert_{\mathrm L^2 ((0,\ell)\times\Omega)} \le C;
\end{align}
\begin{equation}\label{eq:apriori-reaction}
\begin{aligned}
& \left\Vert u^\eps v^\eps \right\Vert_{\mathrm L^1 ((0,\ell)\times\Omega)} \le C\eps;
\qquad
\left\Vert u^\eps \left( 1 - w^\eps\right) \right\Vert_{\mathrm L^1 ((0,\ell)\times\Omega)} \le C\eps;
\\[0.2 cm]
& \left\Vert v^\eps w^\eps \right\Vert_{\mathrm L^1 ((0,\ell)\times\Omega)} \le C\eps.
\end{aligned}
\end{equation}
\end{prop}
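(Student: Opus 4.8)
The plan is to split the two families of estimates and obtain each by a different test function. For the gradient bounds \eqref{eq:aprioi-nabla-u-v} I would use the standard energy method: test the first equation in \eqref{eq:model-eps} with $u^\eps$ and integrate over $\Omega$. The time derivative produces $\tfrac12\tfrac{\rm d}{{\rm d}t}\|u^\eps\|_{\mathrm L^2(\Omega)}^2$; integrating the divergence term by parts and invoking the zero-flux boundary condition in \eqref{eq:model-ibvp} turns the diffusion contribution into $\int_\Omega \nabla u^\eps\cdot A^\eps\nabla u^\eps$, which is $\ge \beta\|\nabla u^\eps\|_{\mathrm L^2(\Omega)}^2$ by the uniform coercivity of $A$ -- note that the ellipticity constant $\beta$ of $A^\eps(\cdot)=A(\cdot/\eps)$ is the same as that of $A$ and is therefore independent of $\eps$. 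The reaction term $\tfrac1\eps\int_\Omega (u^\eps)^2\big(v^\eps+\lambda(1-w^\eps)\big)$ is nonnegative, since $u^\eps,v^\eps\ge 0$ and $0\le w^\eps\le 1$ by Lemma \ref{lem:maxprinc}, so it may be dropped. Integrating in time over $(0,\ell)$ leaves $\beta\|\nabla u^\eps\|_{\mathrm L^2((0,\ell)\times\Omega)}^2\le \tfrac12\|u^{\rm in}\|_{\mathrm L^2(\Omega)}^2$, and the right-hand side is bounded by $u_{\rm max}$ and $|\Omega|$ through \eqref{eq:initial-data}. The same computation with $v^\eps$ and $B^\eps$ gives the bound on $\nabla v^\eps$. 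The essential observation is that every constant here is \emph{independent of $\eps$}.

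For the reaction estimates \eqref{eq:apriori-reaction} I would instead integrate the equations in space, i.e.\ test with the constant function $1$. Integrating the first equation of \eqref{eq:model-eps} over $(0,t)\times\Omega$ kills the divergence term by the zero-flux condition and yields the mass balance
\begin{align*}
\frac1\eps\int_0^t\!\!\int_\Omega u^\eps\big(v^\eps+\lambda(1-w^\eps)\big)\,{\rm d}x\,{\rm d}\tau = \int_\Omega u^{\rm in}\,{\rm d}x - \int_\Omega u^\eps(t,x)\,{\rm d}x \le \|u^{\rm in}\|_{\mathrm L^1(\Omega)}.
\end{align*}
Because each summand of the integrand is nonnegative (Lemma \ref{lem:maxprinc}), this one identity already gives $\|u^\eps v^\eps\|_{\mathrm L^1}\le C\eps$ and, when $\lambda>0$, $\|u^\eps(1-w^\eps)\|_{\mathrm L^1}\le C\eps$. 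The analogous manipulation of the second equation gives $\tfrac\alpha\eps\int_0^t\!\int_\Omega v^\eps(u^\eps+\lambda w^\eps)\le\|v^{\rm in}\|_{\mathrm L^1(\Omega)}$, hence $\|u^\eps v^\eps\|_{\mathrm L^1}\le C\eps$ once more and, for $\lambda>0$, $\|v^\eps w^\eps\|_{\mathrm L^1}\le C\eps$.

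The delicate point -- and the step I expect to be the main obstacle -- is to control $u^\eps(1-w^\eps)$ and $v^\eps w^\eps$ individually when $\lambda=0$, since then these two terms drop out of the $u^\eps$- and $v^\eps$-equations and the only remaining equation that sees them, the third one in \eqref{eq:model-eps}, carries no diffusion. Integrating that equation over $(0,t)\times\Omega$ gives only the difference
\begin{align*}
\frac1\eps\int_0^t\!\!\int_\Omega\big(u^\eps(1-w^\eps)-w^\eps v^\eps\big)\,{\rm d}x\,{\rm d}\tau = \int_\Omega w^\eps(t,x)\,{\rm d}x - \int_\Omega w^{\rm in}\,{\rm d}x,
\end{align*}
whose right-hand side is bounded by $|\Omega|$; this controls the \emph{difference} of two nonnegative quantities at order $\eps$ but not each separately. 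The pointwise identity $u^\eps(1-w^\eps)\,w^\eps v^\eps = u^\eps v^\eps\, w^\eps(1-w^\eps)\le \tfrac14 u^\eps v^\eps$ together with $\|u^\eps v^\eps\|_{\mathrm L^1}\le C\eps$ bounds only the product, which by Cauchy--Schwarz yields only an $\mathcal O(\eps^{1/2})$ control on the smaller of the two terms -- short of the claimed rate. Upgrading this to an individual $\mathcal O(\eps)$ bound, exploiting the strong-competition segregation that forces $w^\eps$ towards $0$ or $1$ wherever a single species dominates, is precisely where one has to follow the finer construction of \cite{Hilhorst_2001}.
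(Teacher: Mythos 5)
Your proof is correct for every fixed $\lambda>0$ and is essentially the argument the paper relies on: the paper does not prove Proposition \ref{prop:apriori} itself but cites \cite[Lemma 2.4]{Hilhorst_2001}, where the gradient bounds \eqref{eq:aprioi-nabla-u-v} come from exactly your energy computation (test with $u^\eps$, resp.\ $v^\eps$, integrate by parts with the zero-flux condition \eqref{eq:model-ibvp}, drop the nonnegative reaction term using Lemma \ref{lem:maxprinc}, and use the $\eps$-independent ellipticity constant $\beta$), and the $\mathrm L^1$ bounds \eqref{eq:apriori-reaction} come from integrating the first two equations of \eqref{eq:model-eps} over $(0,t)\times\Omega$ and exploiting the sign of each summand, just as you do.

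The ``main obstacle'' you flag at $\lambda=0$ is not actually a gap for the proposition as stated and used. The constant $C$ is a generic constant depending on the fixed data of the problem, in particular on $\lambda$; your computation gives $C\sim u_{\rm max}|\Omega|/\lambda$ for the second estimate and $C\sim v_{\rm max}|\Omega|/(\alpha\lambda)$ for the third, which is all that is claimed. In the degenerate case $\lambda=0$, the two estimates you cannot recover are never invoked: in the limit passage \eqref{eq:wk-form-uvw} the terms involving $u^\eps(1-w^\eps)$ and $v^\eps w^\eps$ carry the prefactor $\lambda$, and the $w^\eps$-equation is tested against $\lambda\psi$, so for $\lambda=0$ the only nonlinearity that must vanish in the limit is $u^\eps v^\eps$, which your mass-balance identity controls with a $\lambda$-independent constant. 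Incidentally, your integration of the $w^\eps$-equation shows more than you state: since
\begin{equation*}
\frac1\eps\iint\limits_{(0,t)\times\Omega} u^\eps\left(1-w^\eps\right){\rm d}x\,{\rm d}\tau
=\frac1\eps\iint\limits_{(0,t)\times\Omega} w^\eps v^\eps\,{\rm d}x\,{\rm d}\tau
+\int\limits_\Omega \left( w^\eps(t,x)-w^{\rm in}(x)\right){\rm d}x,
\end{equation*}
the two quantities are equivalent up to an $\mathcal O(\eps)$ error in $\mathrm L^1$, so an individual $\mathcal O(\eps)$ bound on either one would yield the other; but at $\lambda=0$ neither is needed, and no ``finer construction'' from \cite{Hilhorst_2001} has to be invoked (there the latent-heat coefficient is positive, matching the $\lambda>0$ situation your argument already covers).
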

As the impending asymptotic analysis needs to address the nonlinearities in our model, it is essential to obtain strong compactness of our solution family. To that end, we record the following estimates on the time translates (see \cite[Lemma 2.5, pp. ~169--171]{Hilhorst_2001} for proof).
\begin{lem}\label{lem:translates}
Let $(u^\eps,v^\eps,w^\eps)$ be the solution to the competition model \eqref{eq:model-eps}. Then for $\tau>0$,
\begin{equation}
\begin{aligned}
\int\limits_0^{\ell-\tau} \int\limits_\Omega \left\vert u^\eps(t+\tau,x) - u^\eps(t,x) \right\vert^2\, {\rm d}x\, {\rm d}t \le C \tau
\\[0.1 cm]
\int\limits_0^{\ell-\tau} \int\limits_\Omega \left\vert v^\eps(t+\tau,x) - v^\eps(t,x) \right\vert^2\, {\rm d}x\, {\rm d}t \le C \tau.
\end{aligned}
\end{equation}
\end{lem}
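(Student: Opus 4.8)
The plan is to view this as a standard parabolic time-translation (Kolmogorov--Riesz) estimate in which the only genuinely delicate point is the singular $\eps^{-1}$ reaction term. I treat $u^\eps$; the bound for $v^\eps$ is obtained verbatim from the second equation in \eqref{eq:model-eps}. Set $\delta u^\eps(t,x):=u^\eps(t+\tau,x)-u^\eps(t,x)$ and use the telescoping identity $\delta u^\eps(t,x)=\int_t^{t+\tau}\partial_s u^\eps(s,x)\,{\rm d}s$, valid since $(u^\eps,v^\eps,w^\eps)$ is a classical solution (Proposition \ref{prop:well-posed}). Writing the square norm of the difference as a pairing against itself,
\[
\int_0^{\ell-\tau}\!\!\int_\Omega |\delta u^\eps|^2\,{\rm d}x\,{\rm d}t
= \int_0^{\ell-\tau}\!\!\int_\Omega \delta u^\eps(t,x)\left(\int_t^{t+\tau}\partial_s u^\eps(s,x)\,{\rm d}s\right){\rm d}x\,{\rm d}t,
\]
and substituting $\partial_s u^\eps=\nabla\cdot(A^\eps\nabla u^\eps)-\eps^{-1}u^\eps\left(v^\eps+\lambda(1-w^\eps)\right)$ from \eqref{eq:model-eps} splits the right-hand side into a diffusive part and a reactive part.

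For the diffusive part I would integrate by parts in $x$, moving the divergence onto $\delta u^\eps(t,\cdot)$ and using the zero-flux boundary condition in \eqref{eq:model-ibvp}; since $\nabla\delta u^\eps(t,x)=\nabla u^\eps(t+\tau,x)-\nabla u^\eps(t,x)$, this term is controlled by $\|A\|_{\mathrm L^\infty}$ times $\int_0^{\ell-\tau}\int_t^{t+\tau}\int_\Omega |\nabla\delta u^\eps(t,x)|\,|\nabla u^\eps(s,x)|\,{\rm d}x\,{\rm d}s\,{\rm d}t$. Two applications of the Cauchy--Schwarz inequality together with the gradient bound \eqref{eq:aprioi-nabla-u-v} reduce this to $C\tau$, the factor $\tau$ arising because, for each fixed $s$, Fubini's theorem restricts $t$ to an interval of length at most $\tau$. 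For the reactive part I would instead use the uniform bound $|\delta u^\eps|\le 2u_{\rm max}$ furnished by the maximum principle (Lemma \ref{lem:maxprinc}) and pull it out in $\mathrm L^\infty$; what remains is $\eps^{-1}\int_0^{\ell-\tau}\int_t^{t+\tau}\int_\Omega u^\eps\left(v^\eps+\lambda(1-w^\eps)\right){\rm d}x\,{\rm d}s\,{\rm d}t$, and the same interval-length argument yields a factor $\tau$ in front of $\eps^{-1}\|u^\eps(v^\eps+\lambda(1-w^\eps))\|_{\mathrm L^1((0,\ell)\times\Omega)}$.

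The crux, and the only place where the strong-competition scaling really enters, is this last quantity: the prefactor $\eps^{-1}$ would be fatal were it not for the reaction estimates \eqref{eq:apriori-reaction}, which say precisely that $\|u^\eps v^\eps\|_{\mathrm L^1}$ and $\|u^\eps(1-w^\eps)\|_{\mathrm L^1}$ are of order $\eps$; the two scalings cancel and leave a bound uniform in $\eps$. The corresponding statement for $v^\eps$ relies in the same way on the order-$\eps$ control of $\|u^\eps v^\eps\|_{\mathrm L^1}$ and $\|v^\eps w^\eps\|_{\mathrm L^1}$. Everything else is routine; I therefore expect the verification that the $\eps$-powers match to be the essential content, with the diffusive term being a standard energy-type estimate.
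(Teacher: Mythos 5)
Your argument is correct and is essentially the standard proof, which is the same one the paper invokes by citation (\cite[Lemma 2.5, pp.~169--171]{Hilhorst_2001}): write $u^\eps(t+\tau)-u^\eps(t)=\int_t^{t+\tau}\partial_s u^\eps\,{\rm d}s$, substitute the equation, integrate the diffusive part by parts using the zero-flux condition and control it via \eqref{eq:aprioi-nabla-u-v}, and absorb the $\eps^{-1}$ in the reaction part by pairing the $\mathrm L^\infty$ bound of Lemma \ref{lem:maxprinc} with the $\mathcal{O}(\eps)$ estimates \eqref{eq:apriori-reaction}. The only point deserving care is the Cauchy--Schwarz bookkeeping in the diffusive term: a careless nested application yields only $C\sqrt{\tau}$, whereas applying Cauchy--Schwarz over the full product domain $\{(t,s,x):0\le t\le \ell-\tau,\ t\le s\le t+\tau,\ x\in\Omega\}$ (with the Fubini interval-length observation used inside \emph{each} squared factor) delivers the stated $C\tau$, exactly as your description of the mechanism suggests.
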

Note that the family of population densities $u^\eps$ and $v^\eps$ are uniformly bounded in $\mathrm L^2((0,\ell)\times\Omega)$, i.e.,
\begin{align*}
\left\Vert u^\eps \right\Vert_{\mathrm L^2((0,\ell)\times\Omega)} \le C;
\qquad
\left\Vert v^\eps \right\Vert_{\mathrm L^2((0,\ell)\times\Omega)} \le C
\end{align*}
with the positive constant $C$ being independent of $\eps$, thanks to the uniform $\mathrm L^\infty$ estimates from Lemma \ref{lem:maxprinc}. Combining this with the estimate \eqref{eq:aprioi-nabla-u-v} yields a uniform estimate in $\mathrm L^2((0,\ell);\mathrm H^1(\Omega))$. Note further that Lemma \ref{lem:translates} gives the $\mathrm L^2$ estimates on the time translates of the families $u^\eps$ and $v^\eps$. Hence we can invoke the Aubin-Lions compactness criterion to arrive at the following relative compactness result on the families of the population densities.
\begin{prop}[Relative compactness]\label{prop:relative-compact-L2}
Let $u^\eps(t,x)$ and $v^\eps(t,x)$ be the family of population densities associated with the evolution system \eqref{eq:model-eps}-\eqref{eq:model-ibvp}. Then, up to extraction of subsequence, we have
\begin{align*}
u^\eps  & \relbar\joinrel\relbar\joinrel\relbar\joinrel\relbar\joinrel\to u^* \mbox{ strongly in }\mathrm L^2((0,\ell)\times\Omega),
\\
v^\eps  & \relbar\joinrel\relbar\joinrel\relbar\joinrel\relbar\joinrel\to v^* \mbox{ strongly in }\mathrm L^2((0,\ell)\times\Omega).
\end{align*}
\end{prop}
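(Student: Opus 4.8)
The plan is to obtain the strong convergence as a direct application of an Aubin--Lions--Simon type compactness criterion, since every ingredient it requires has already been assembled above. Writing $\mathrm L^2((0,\ell)\times\Omega)\cong \mathrm L^2((0,\ell);\mathrm L^2(\Omega))$, I would take $B=\mathrm L^2(\Omega)$ as the target space and $X=\mathrm H^1(\Omega)$ as the space carrying the spatial regularity. Since $\Omega$ is a bounded domain (with, say, Lipschitz boundary), the Rellich--Kondrachov theorem furnishes the compact embedding $\mathrm H^1(\Omega)\hookrightarrow\hookrightarrow \mathrm L^2(\Omega)$, which provides the needed spatial compactness.

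Next I would record the two estimates feeding the criterion. On the one hand, combining the uniform $\mathrm L^2$ bound on $u^\eps$ coming from Lemma \ref{lem:maxprinc} with the gradient bound \eqref{eq:aprioi-nabla-u-v} yields a uniform-in-$\eps$ bound for $u^\eps$ in $\mathrm L^2((0,\ell);\mathrm H^1(\Omega))$, and likewise for $v^\eps$. On the other hand, the time-translate estimate of Lemma \ref{lem:translates} gives
\begin{align*}
\sup_{\eps>0}\int_0^{\ell-\tau}\!\!\int_\Omega \left\vert u^\eps(t+\tau,x)-u^\eps(t,x)\right\vert^2\,\mathrm dx\,\mathrm dt \le C\tau \longrightarrow 0 \quad\text{as }\tau\to0^+,
\end{align*}
which is precisely the uniform (in $\eps$) equicontinuity in time, measured in $B=\mathrm L^2(\Omega)$, demanded by the criterion; the analogous bound holds for $v^\eps$.

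With these two pieces in hand, the Aubin--Lions--Simon lemma in its time-translate formulation asserts that a family bounded in $\mathrm L^2((0,\ell);X)$ with $X\hookrightarrow\hookrightarrow B$, and whose $B$-valued time translates tend to zero uniformly, is relatively compact in $\mathrm L^2((0,\ell);B)$. Applying this first to $u^\eps$ and then to $v^\eps$, and extracting a common subsequence by a diagonal argument, produces limit points $u^\ast,v^\ast$ realizing the asserted strong $\mathrm L^2((0,\ell)\times\Omega)$ convergence.

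I should emphasize the one genuinely delicate point, which is why the temporal compactness is routed through Lemma \ref{lem:translates} rather than through the classical form of Aubin--Lions. The classical statement would require a uniform bound on $\partial_t u^\eps$ in some space of negative order, and such a bound is \emph{not} available here: the reaction terms in \eqref{eq:model-eps} scale like $\eps^{-1}$ and are therefore not controlled uniformly as $\eps\to0$. Consequently the equicontinuity in time must be extracted directly from the time-translate estimate, which is exactly the step already carrying the real analytical weight. Granting that lemma, the proof of the present proposition reduces to the bookkeeping invocation of the compactness criterion described above, so I would not expect any further obstacle within this statement itself.
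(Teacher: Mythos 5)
Your proof is correct and takes essentially the same approach as the paper: a uniform bound in $\mathrm L^2((0,\ell);\mathrm H^1(\Omega))$ from Lemma \ref{lem:maxprinc} together with \eqref{eq:aprioi-nabla-u-v}, time-translate equicontinuity from Lemma \ref{lem:translates}, and the Aubin--Lions compactness criterion in its translate (Simon) form, which is exactly how the paper argues. One small caveat that does not affect your argument: your aside claiming no negative-order bound on $\partial_t u^\eps$ is available is not quite right, since \eqref{eq:apriori-reaction} shows the $\eps^{-1}$-scaled reaction terms are uniformly bounded in $\mathrm L^1((0,\ell)\times\Omega)$, so $\partial_t u^\eps$ is in fact uniformly bounded in a space such as $\mathrm L^2((0,\ell);\mathrm H^{-1}(\Omega))+\mathrm L^1\bigl((0,\ell);(\mathrm H^{s}(\Omega))'\bigr)$ with $s>d/2$, and a Simon-type criterion admitting an $\mathrm L^1$-in-time derivative bound would also apply.
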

\begin{rem}
The approach in \cite{Hilhorst_2001} to obtain strong compactness of the family is to treat the translates in both the space time variables and employ the Kolmogorov-Riesz-Fr\'echet criterion in space-time \cite[Theorem 4.26, page 111]{Brezis_2010}. One can also obtain the aforementioned compactness via the criterion mentioned in Simon's seminal paper \cite[Theorem 1, page 71]{Simon_1986}.
\end{rem}

\section{Two-scale convergence and homogenization}\label{sec:two-scale}
In this section, we briefly recall the notion of two-scale converegence introduced by Nguetseng \cite{Nguetseng_1989} and further developed by Allaire \cite{Allaire_1992}. This is a notion of multi-scale weak convergence which captures oscillations in a function sequence. To be precise, let us recall the definition. As this paper deals with function sequences that depend on the time variable, we make the choice of presenting the two-scale convergence theory with the time variable. However, the time variable simply plays the role of a parameter. Furthermore, we only give the definition in the $\mathrm L^2$-setting. The theory of two-scale convergence is also available in $\mathrm L^p$-spaces with $p\in(1,\infty)$.
\begin{defi}[Two-scale convergence]\label{defi:two-scale}
A family $f^\eps\subset\mathrm L^2((0,\ell)\times\Omega)$ is said to two-scale converge to a limit $f_0(t,x,y)\in\mathrm L^2((0,\ell)\times\Omega\times Y)$ if the following limit holds for any smooth test function $\psi(t,x,y)$ which is $Y$-periodic in the $y$ variable
\begin{align*}
\lim_{\eps\to0} \iint\limits_{(0,\ell)\times\Omega} f^\eps(t,x) \psi\left(t,x,\frac{x}{\eps}\right)\, {\rm d}x\, {\rm d}t 
=
\iiint\limits_{(0,\ell)\times\Omega\times Y} f_0(t,x,y) \psi(t,x,y)\, {\rm d}y\, {\rm d}x\, {\rm d}t.
\end{align*}
We denote the above convergence as $f^\eps\twoscale f_0$.
\end{defi}
The following compactness result is the cornerstone of the two-scale convergence theory (see \cite[Theorem 1, p.611]{Nguetseng_1989} and \cite[Theorem 1.2, p.1485]{Allaire_1992}).
\begin{thm}[Two-scale compactness]\label{thm:2scale-L2}
Suppose $f^\eps(t,x)$ is a uniformly bounded family in $\mathrm L^2((0,\ell)\times\Omega)$, i.e.,
\begin{align*}
\left\Vert f^\eps \right\Vert_{\mathrm L^2((0,\ell)\times\Omega)} \le C
\end{align*}
with the constant $C>0$ being independent of $\eps$. Then we can extract a subsequence, still denoted $f^\eps(t,x)$, and there exists a limit $f_0(t,x,y)\in\mathrm L^2((0,\ell)\times\Omega\times Y)$ such that
\begin{align*}
f^\eps \twoscale f_0.
\end{align*}
\end{thm}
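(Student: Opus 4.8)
The plan is to follow the classical functional-analytic argument of Nguetseng and Allaire, viewing two-scale convergence as the convergence of a sequence of linear functionals acting on an appropriate separable space of oscillating test functions. First I would extract a subsequence along which these functionals converge, using a diagonal argument based on separability together with a uniform bound; then I would represent the limiting functional by an element $f_0\in\mathrm L^2((0,\ell)\times\Omega\times Y)$ through the Riesz representation theorem. Throughout, the time variable $t$ enters only as an inert parameter and plays no special role.

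More precisely, for each admissible test function $\psi(t,x,y)$ that is smooth in $(t,x)$ and $Y$-periodic in $y$, define the linear functional
\[
\mu^\eps(\psi) := \iint\limits_{(0,\ell)\times\Omega} f^\eps(t,x)\, \psi\!\left(t,x,\frac{x}{\eps}\right)\, {\rm d}x\, {\rm d}t.
\]
The cornerstone is the mean-value (averaging) property for oscillating test functions: for any such $\psi$,
\[
\lim_{\eps\to0} \iint\limits_{(0,\ell)\times\Omega} \left\vert \psi\!\left(t,x,\frac{x}{\eps}\right) \right\vert^2\, {\rm d}x\, {\rm d}t = \iiint\limits_{(0,\ell)\times\Omega\times Y} \left\vert \psi(t,x,y) \right\vert^2\, {\rm d}y\, {\rm d}x\, {\rm d}t.
\]
This is obtained by freezing the slow variables, applying the Riemann--Lebesgue lemma to the rapidly oscillating $Y$-periodic map $y\mapsto\left\vert\psi(t,x,y)\right\vert^2$, and exploiting the uniform continuity of $\psi$ in $(t,x)$ via a partition of $\Omega$ into $\eps$-cells and a Riemann-sum comparison. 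Combining this with the Cauchy--Schwarz inequality and the uniform bound $\left\Vert f^\eps \right\Vert_{\mathrm L^2((0,\ell)\times\Omega)} \le C$ yields
\[
\limsup_{\eps\to0} \left\vert \mu^\eps(\psi) \right\vert \le C\, \left\Vert \psi \right\Vert_{\mathrm L^2((0,\ell)\times\Omega\times Y)},
\]
so the family $\{\mu^\eps\}$ is uniformly bounded in the dual norm induced by the $\mathrm L^2((0,\ell)\times\Omega\times Y)$ topology.

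Next I would invoke separability. The class of admissible test functions is dense in $\mathrm L^2((0,\ell)\times\Omega\times Y)$ and admits a countable dense subset $\{\psi_k\}$. For each fixed $k$ the scalar sequence $\mu^\eps(\psi_k)$ is bounded, so a diagonal extraction produces a subsequence (still denoted $f^\eps$) along which $\mu^\eps(\psi_k)$ converges for every $k$; the uniform bound above then upgrades this to convergence of $\mu^\eps(\psi)$ for every admissible $\psi$, with limit $\mu(\psi)$. The functional $\mu$ is linear and satisfies $\left\vert \mu(\psi) \right\vert \le C\, \left\Vert \psi \right\Vert_{\mathrm L^2((0,\ell)\times\Omega\times Y)}$, hence extends to a bounded linear functional on all of $\mathrm L^2((0,\ell)\times\Omega\times Y)$. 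By the Riesz representation theorem there is a unique $f_0\in\mathrm L^2((0,\ell)\times\Omega\times Y)$ with
\[
\mu(\psi) = \iiint\limits_{(0,\ell)\times\Omega\times Y} f_0(t,x,y)\, \psi(t,x,y)\, {\rm d}y\, {\rm d}x\, {\rm d}t,
\]
which is exactly the assertion $f^\eps \twoscale f_0$.

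The main obstacle is the rigorous justification of the averaging lemma together with the compatible choice of the class of admissible test functions. One needs simultaneously enough regularity that $x\mapsto\psi(t,x,x/\eps)$ is a genuine $\mathrm L^2$ function and the Riemann-sum argument converges, yet enough generality that the class remains dense in $\mathrm L^2((0,\ell)\times\Omega\times Y)$ for the Riesz step to capture the full limit. Reconciling these two competing demands---smoothness for the averaging versus density for the representation---is the technical heart of the Nguetseng--Allaire theory, and it is where the proof requires genuine care.
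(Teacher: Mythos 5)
Your proposal is correct and is precisely the classical Nguetseng--Allaire argument (oscillating test functionals, the averaging lemma, diagonal extraction over a separable class of admissible test functions, and Riesz representation in $\mathrm L^2((0,\ell)\times\Omega\times Y)$), which is exactly the proof the paper points to by citing \cite[Theorem 1, p.611]{Nguetseng_1989} and \cite[Theorem 1.2, p.1485]{Allaire_1992} rather than proving the theorem itself. You also correctly identify the genuine technical point---the tension between admissibility (measurability of $x\mapsto\psi(t,x,x/\eps)$ and validity of the mean-value property) and density in $\mathrm L^2$---which is where the cited references devote their care.
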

Another important result in the two-scale convergence theory which makes it a valuable tool in periodic homogenization is as follows.
\begin{thm}[Two-scale compactness in $\mathrm H^1$]\label{thm:2scale-H1}
Suppose $f^\eps(t,x)$ be a uniformly bounded family in $\mathrm L^2((0,\ell);\mathrm H^1(\Omega))$, i.e.,
\begin{align*}
\iint\limits_{(0,\ell)\times\Omega} \left\vert f^\eps(t,x) \right\vert^2\, {\rm d}x\, {\rm d}t
+ \iint\limits_{(0,\ell)\times\Omega} \left\vert \nabla f^\eps(t,x) \right\vert^2\, {\rm d}x\, {\rm d}t
\le C
\end{align*}
with the constant $C>0$ being independent of $\eps$. Then we can extract a subsequence, still denoted $f^\eps(t,x)$, and there exist limits $f_0(t,x)\in\mathrm L^2((0,\ell);\mathrm H^1(\Omega))$, $f_1(t,x,y)\in \mathrm L^2((0,\ell)\times\Omega;\mathrm H^1_\#(Y)/\R)$ such that
\begin{align*}
f^\eps & \relbar\joinrel\relbar\joinrel\relbar\joinrel\relbar\joinrel\rightharpoonup f_0 \mbox{ weakly in }L^2((0,\ell);\mathrm H^1(\Omega))
\\
\nabla f^\eps & \twoscale \nabla_x f_0 + \nabla_y f_1
\end{align*}
\end{thm}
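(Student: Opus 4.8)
The plan is to combine the weak compactness of the Hilbert space $\mathrm L^2((0,\ell);\mathrm H^1(\Omega))$ with the two-scale compactness already recorded in Theorem \ref{thm:2scale-L2}, and then to identify the two-scale limit of the gradient via a duality argument against $y$-divergence-free test fields. First I would invoke reflexivity: since $f^\eps$ is bounded in the Hilbert space $\mathrm L^2((0,\ell);\mathrm H^1(\Omega))$, a subsequence converges weakly to some $f_0$ in that space, which establishes the first asserted convergence. Simultaneously, both $f^\eps$ and $\nabla f^\eps$ are bounded in $\mathrm L^2((0,\ell)\times\Omega)$, so Theorem \ref{thm:2scale-L2} furnishes, along a further subsequence, two-scale limits $\tilde f_0 \in \mathrm L^2((0,\ell)\times\Omega\times Y)$ of $f^\eps$ and $\xi_0 \in \mathrm L^2((0,\ell)\times\Omega\times Y;\R^d)$ of $\nabla f^\eps$.

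Next I would show that $\tilde f_0$ is independent of $y$ and in fact coincides with $f_0$. For the $y$-independence, I test $\nabla f^\eps$ against $\eps\,\Phi(t,x,x/\eps)$ for a smooth $Y$-periodic vector field $\Phi$ compactly supported in $(t,x)$; the resulting integral is $\mathcal O(\eps)$ by the gradient bound, while after integrating by parts in $x$ the leading contribution is $-\int f^\eps (\mathrm{div}_y\Phi)(t,x,x/\eps)$, whose two-scale limit must therefore vanish. This yields $\iiint \tilde f_0\,(\mathrm{div}_y\Phi) = 0$ for all such $\Phi$, i.e. $\nabla_y \tilde f_0 = 0$. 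Testing instead against a $y$-independent function and comparing the weak $\mathrm L^2$ limit with the two-scale limit (recalling $|Y|=1$) then forces $\tilde f_0 = f_0$.

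The crux is the identification of $\xi_0$. Here I take a smooth test field $\Psi(t,x,y)$, compactly supported in $(t,x)$, $Y$-periodic and divergence-free in $y$ (that is, $\mathrm{div}_y\Psi=0$), and integrate $\nabla f^\eps \cdot \Psi(t,x,x/\eps)$ by parts in $x$. Since $\mathrm{div}_x[\Psi(t,x,x/\eps)] = (\mathrm{div}_x\Psi)(t,x,x/\eps) + \eps^{-1}(\mathrm{div}_y\Psi)(t,x,x/\eps)$ and the singular term drops out by the constraint, passing to the two-scale limit and then integrating by parts back in $x$ (using $f_0 \in \mathrm H^1$ together with its $y$-independence) gives the orthogonality relation $\iiint (\xi_0 - \nabla_x f_0)\cdot\Psi = 0$ for every $y$-divergence-free $\Psi$.

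The main obstacle is then the purely periodic functional-analytic fact that the $\mathrm L^2(Y)$-orthogonal complement of the space of divergence-free $Y$-periodic fields is exactly the set of $y$-gradients $\nabla_y \phi$ with $\phi \in \mathrm H^1_\#(Y)/\R$. Granting this decomposition — applied with $(t,x)$ as parameters, together with a measurable-selection argument to ensure that the resulting potential depends measurably and square-integrably on $(t,x)$ — I conclude that $\xi_0 - \nabla_x f_0 = \nabla_y f_1$ for some $f_1 \in \mathrm L^2((0,\ell)\times\Omega;\mathrm H^1_\#(Y)/\R)$, which is precisely the stated characterisation $\nabla f^\eps \twoscale \nabla_x f_0 + \nabla_y f_1$.
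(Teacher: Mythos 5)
Your proposal is correct and is essentially the canonical argument: the paper does not prove Theorem \ref{thm:2scale-H1} itself, deferring instead to Nguetseng and Allaire, and your proof (weak compactness in the Hilbert space $\mathrm L^2((0,\ell);\mathrm H^1(\Omega))$, two-scale limits of $f^\eps$ and $\nabla f^\eps$, the $\eps$-scaled test fields to show the limit of $f^\eps$ is $y$-independent and equals $f_0$, testing against $y$-divergence-free fields, and finally the orthogonal decomposition of $Y$-periodic $\mathrm L^2$ vector fields into divergence-free fields and gradients $\nabla_y\phi$ with $\phi\in\mathrm H^1_\#(Y)/\R$) is precisely the proof given in those references. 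The only step deserving a word is the measurable selection, which can be handled as you indicate or bypassed entirely by defining $f_1(t,x,\cdot)$ as the Lax--Milgram solution of the cell problem with datum $(\xi_0-\nabla_x f_0)(t,x,\cdot)$, so that measurability and square-integrability in $(t,x)$ follow from the linear, bounded dependence on the data.
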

Refer to \cite[Theorem 3, p.618]{Nguetseng_1989} and \cite[Proposition 1.14, p.1491]{Allaire_1992} for the proof of the above result. In the statement of Theorem \ref{thm:2scale-H1}, we have used the standard notation for $Y$-periodic function space
\[
\mathrm H^1_\#(Y) := \Big\{ f:\mathbb{R}^d \to \R \, \, \mbox{ such that }\, \, f\mbox{ is }Y\mbox{-periodic and }\left\Vert f \right\Vert_{\mathrm H^1(Y)} < \infty\Big\}.
\]
The following result states that a norm convergence information on the family implies that the family two-scale converges to the same limit (see \cite[Theorem 5, p.42]{Lukkassen_2002} for the proof).
\begin{thm}[Convergence in norm]
Suppose $f^\eps(t,x)$ is a family which strongly converges in $\mathrm L^2((0,\ell)\times\Omega)$ to $f_0(t,x)$, i.e., 
\begin{align*}
\lim_{\eps\to0}\iint\limits_{(0,\ell)\times\Omega} \left\vert f^\eps (t,x) - f_0(t,x) \right\vert^2\, {\rm d}x\, {\rm d}t = 0
\end{align*}
then we have
\begin{align*}
f^\eps \twoscale f_0
\end{align*}
\end{thm}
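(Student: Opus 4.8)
The plan is to test $f^\eps$ against an arbitrary admissible test function $\psi(t,x,y)$, smooth and $Y$-periodic in $y$, and to compare the resulting integral with the candidate two-scale limit. Writing $\psi^\eps(t,x) := \psi\left(t,x,\tfrac{x}{\eps}\right)$ and denoting the cell average by $\overline{\psi}(t,x) := \int_Y \psi(t,x,y)\, {\rm d}y$, the aim is to show
\[
\lim_{\eps\to0} \iint\limits_{(0,\ell)\times\Omega} f^\eps\, \psi^\eps\, {\rm d}x\, {\rm d}t = \iint\limits_{(0,\ell)\times\Omega} f_0\, \overline{\psi}\, {\rm d}x\, {\rm d}t .
\]
Because the candidate limit $f_0$ does not depend on $y$, the right-hand side in Definition \ref{defi:two-scale} collapses to exactly $\iint f_0\, \overline{\psi}$, so establishing the display above is precisely the assertion $f^\eps \twoscale f_0$.

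First I would decompose the tested integral as
\[
\iint\limits_{(0,\ell)\times\Omega} f^\eps\, \psi^\eps\, {\rm d}x\, {\rm d}t - \iint\limits_{(0,\ell)\times\Omega} f_0\, \overline{\psi}\, {\rm d}x\, {\rm d}t = I_1^\eps + I_2^\eps,
\]
where $I_1^\eps := \iint (f^\eps - f_0)\, \psi^\eps\, {\rm d}x\, {\rm d}t$ and $I_2^\eps := \iint f_0\, (\psi^\eps - \overline{\psi})\, {\rm d}x\, {\rm d}t$. The term $I_1^\eps$ is immediately controlled by the Cauchy--Schwarz inequality: $|I_1^\eps| \le \left\Vert f^\eps - f_0 \right\Vert_{\mathrm L^2} \left\Vert \psi^\eps \right\Vert_{\mathrm L^2}$. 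Since $\psi$ is smooth and $Y$-periodic it is bounded, whence $\left\Vert \psi^\eps \right\Vert_{\mathrm L^2((0,\ell)\times\Omega)}$ is bounded uniformly in $\eps$, and the strong convergence hypothesis $\left\Vert f^\eps - f_0 \right\Vert_{\mathrm L^2}\to0$ then forces $I_1^\eps\to0$.

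The genuine content is the term $I_2^\eps$, which I expect to be the main obstacle. The key ingredient is the classical averaging (mean-value) property of rapidly oscillating periodic functions: for smooth $Y$-periodic $\psi$, the family $\psi^\eps$ converges weakly in $\mathrm L^2((0,\ell)\times\Omega)$ to its cell average $\overline{\psi}$. Granting this, since $f_0\in \mathrm L^2((0,\ell)\times\Omega)$, the very definition of weak convergence gives $I_2^\eps = \langle f_0,\, \psi^\eps - \overline{\psi}\rangle_{\mathrm L^2} \to 0$, and combining with $I_1^\eps\to0$ finishes the proof.

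It therefore remains to justify the averaging property, which I would establish in stages. For a $y$-dependence consisting of a single Fourier mode $e^{2\pi i k\cdot y}$ with $k\neq0$ the Riemann--Lebesgue lemma yields the oscillation-induced decay directly, while the $k=0$ mode reproduces $\overline{\psi}$ exactly; by linearity this extends to trigonometric polynomials, and then to arbitrary smooth $Y$-periodic $\psi$ by density together with the uniform bound $\left\Vert \psi^\eps \right\Vert_{\mathrm L^2(\Omega)} \le C \left\Vert \psi \right\Vert_{\mathrm L^2(Y)}$. The slow dependence on $(t,x)$ is dealt with by approximation: smoothness of $\psi$ gives uniform continuity on the relevant compact set, so one freezes $(t,x)$ on a fine partition into cubes and applies the mean-value property cell by cell, the resulting error being controlled by the modulus of continuity of $\psi$. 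Alternatively one may cite the standard averaging lemma from periodic homogenization as a black box, in which case the entire argument reduces to the elementary splitting into $I_1^\eps$ and $I_2^\eps$ above.
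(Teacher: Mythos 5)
Your proof is correct. The paper does not prove this theorem itself but delegates it to the cited reference (Lukkassen--Nguetseng--Wall), and your argument --- splitting $\iint f^\eps\psi^\eps - \iint f_0\overline{\psi}$ into $I_1^\eps$, controlled by Cauchy--Schwarz and the strong $\mathrm L^2$ convergence, and $I_2^\eps$, controlled by the classical averaging lemma $\psi(t,x,\tfrac{x}{\eps})\rightharpoonup\int_Y\psi(t,x,y)\,{\rm d}y$ for smooth admissible test functions --- is precisely the standard proof given there, so there is nothing to correct.
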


\subsection{Homogenization result}
In this subsection, we state and prove our main result on the $\eps\to0$ limit for the competition-diffusion model. Our strategy of proof is to employ the notion of two-scale convergence detailed earlier.

\begin{thm}[Homogenization limit]\label{thm:homogen}
Let $(u^\eps,v^\eps,w^\eps)$ be the solution to the competition-diffusion model \eqref{eq:model-eps}-\eqref{eq:model-ibvp}. Then there exists a subsequence $(u^\eps, v^\eps, w^\eps)$ and functions $u^*\in\mathrm L^2((0,\ell);\mathrm H^1(\Omega))$, $u_1\in \mathrm L^2((0,\ell)\times\Omega;\mathrm H^1_\#(Y)/\R)$, $v^*\in\mathrm L^2((0,\ell);\mathrm H^1(\Omega))$, $v_1\in \mathrm L^2((0,\ell)\times\Omega;\mathrm H^1_\#(Y)/\R)$, $w^*\in\mathrm L^2((0,\ell)\times\Omega)$ such that
\begin{equation}\label{eq:thm-limits}
\begin{aligned}
u^\eps  & \relbar\joinrel\relbar\joinrel\relbar\joinrel\relbar\joinrel\to u^* \mbox{ strongly in }\mathrm L^2((0,\ell)\times\Omega)
\\
\nabla u^\eps & \twoscale \nabla_x u^* + \nabla_y u_1
\\
v^\eps  & \relbar\joinrel\relbar\joinrel\relbar\joinrel\relbar\joinrel\to v^* \mbox{ strongly in }\mathrm L^2((0,\ell)\times\Omega)
\\
\nabla v^\eps & \twoscale \nabla_x v^* + \nabla_y v_1
\\
w^\eps & \relbar\joinrel\relbar\joinrel\relbar\joinrel\relbar\joinrel\rightharpoonup w^* \mbox{ weakly in }\mathrm L^2((0,\ell)\times\Omega).
\end{aligned}
\end{equation}
Furthermore, we have that the functions $(u_1, v_1)$ can be decomposed as
\begin{equation}\label{eq:thm:u1-v1}
\begin{aligned}
u_1(t,x,y) = \sum_{i=1}^d \omega_i(y) \frac{\partial u^*}{\partial x_i}(t,x);
\qquad
v_1(t,x,y) = \sum_{i=1}^d \chi_i(y) \frac{\partial v^*}{\partial x_i}(t,x)
\end{aligned}
\end{equation}
with $(\omega_i, \chi_i)_{1\le i\le d}$ solving the decoupled system of periodic-boundary value problems
\begin{equation}\label{eq:thm:cell-pbs}
\begin{aligned}
{\rm div}_y \Big( A(y) \Big( {\bf e}_i + \nabla_y \omega_i(y) \Big) \Big) & = 0 \qquad \mbox{ in }Y,
\\[0.1 cm]
{\rm div}_y \Big( B(y) \Big( {\bf e}_i + \nabla_y \chi_i(y) \Big) \Big) & = 0 \qquad \mbox{ in }Y,
\end{aligned}
\end{equation}
for each $i\in\{1,\cdots,d\}$ where $\{{\bf e}_i\}_{i=1}^d$ denotes the canonical basis in $\R^d$.\\
Moreover, the triple of functions $(u^*,v^*,w^*)$ satisfy
\begin{equation}\label{eq:thm:limit-wf}
\begin{aligned}
& - \iint\limits_{(0,\ell)\times\Omega} \left( u^* - \frac{v^*}{\alpha} + \lambda w^* \right) \partial_t \psi(t,x)\, {\rm d}x\, {\rm d}t
\\[0.2 cm]
& - \int\limits_\Omega \left( u^{\rm in}(x) - \frac{v^{\rm in}(x)}{\alpha} + \lambda w^{\rm in}(x) \right) \psi(0,x)  \, {\rm d}x
\\[0.2 cm]
& + \iiint\limits_{(0,\ell)\times\Omega} \Big(  A_{\rm hom} \nabla u^*(t,x) - \frac{B_{\rm hom}}{\alpha} \nabla v^*(t,x) \Big) \cdot \nabla \psi(t,x) \, {\rm d}x\, {\rm d}t
= 0
\end{aligned}
\end{equation}
for any smooth test function $\psi$ such that $\psi(\ell,x)=0$. The homogenized matrices $A_{\rm hom}$ and $B_{\rm hom}$ are given by the formulae in terms of the cell solutions
\begin{equation}\label{eq:thm:Ahom-Bhom}
\begin{aligned}
\left[ A_{\rm hom}\right]_{ij} & = \int\limits_{Y} A(y) \Big( {\bf e}_j + \nabla_y \omega_j(y) \Big)\cdot {\bf e}_i\, {\rm d}y
\\
\left[ B_{\rm hom}\right]_{ij} & = \int\limits_{Y} B(y) \Big( {\bf e}_j + \nabla_y \chi_j(y) \Big)\cdot {\bf e}_i\, {\rm d}y
\end{aligned}
\end{equation}
for $i,j\in\{1,\cdots,d\}$.
\end{thm}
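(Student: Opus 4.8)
The plan is to combine the compactness results already assembled with the classical two-scale homogenization procedure, and to pass to the limit in a judiciously chosen weak formulation that kills the singular reaction terms. First I would record the a priori bounds: by Proposition \ref{prop:apriori} the families $\nabla u^\eps, \nabla v^\eps$ are bounded in $\mathrm L^2((0,\ell)\times\Omega)$, so by Theorem \ref{thm:2scale-H1} we extract subsequences with $\nabla u^\eps \twoscale \nabla_x u^* + \nabla_y u_1$ and similarly for $v^\eps$, while Proposition \ref{prop:relative-compact-L2} already gives strong $\mathrm L^2$ convergence $u^\eps \to u^*$, $v^\eps \to v^*$. Since $0 \le w^\eps \le 1$ by Lemma \ref{lem:maxprinc}, the family is bounded in $\mathrm L^2$ and we extract a weak limit $w^\eps \rightharpoonup w^*$. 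This establishes all the convergences in \eqref{eq:thm-limits}.

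To identify the correctors $u_1, v_1$ and obtain the cell problems \eqref{eq:thm:cell-pbs}, I would test the first equation of \eqref{eq:model-eps} against an oscillating test function of the form $\eps\,\phi(t,x)\,\theta(x/\eps)$ with $\theta$ being $Y$-periodic, integrate by parts, and let $\eps \to 0$. The crucial point is that the factor $\eps$ multiplying the reaction term $\frac{u^\eps}{\eps}(v^\eps + \lambda(1-w^\eps))$ cancels the singular $\eps^{-1}$, leaving a term that vanishes in the limit by the $\mathrm L^1$ bounds \eqref{eq:apriori-reaction}; likewise the $\eps\,\partial_t$ contribution disappears. What survives is the two-scale limit of the diffusion term, yielding the variational identity
\begin{equation*}
\iiint\limits_{(0,\ell)\times\Omega\times Y} A(y)\bigl(\nabla_x u^* + \nabla_y u_1\bigr)\cdot \nabla_y \theta(y)\,\phi(t,x)\, {\rm d}y\, {\rm d}x\, {\rm d}t = 0,
\end{equation*}
which is the weak form of $\mathrm{div}_y(A(y)(\nabla_x u^* + \nabla_y u_1)) = 0$. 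By linearity in $\nabla_x u^*$ this forces the ansatz \eqref{eq:thm:u1-v1} with $\omega_i$ solving \eqref{eq:thm:cell-pbs}, and an identical argument with $B$ handles $\chi_i$ and $v_1$.

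For the macroscopic limit equation \eqref{eq:thm:limit-wf}, the key algebraic device is to work with the conserved combination $u^\eps - \frac{v^\eps}{\alpha} + \lambda w^\eps$, exactly as in Lemma \ref{lem:conserve-quantity}: taking the first equation, subtracting $\alpha^{-1}$ times the second, and adding $\lambda$ times the third causes every singular reaction term to cancel identically, leaving $\partial_t(u^\eps - \frac{v^\eps}{\alpha} + \lambda w^\eps) = \nabla\cdot(A^\eps \nabla u^\eps) - \nabla\cdot(B^\eps \nabla v^\eps)$. I would then multiply by a smooth test function $\psi(t,x)$ with $\psi(\ell,\cdot)=0$, integrate by parts in time (producing the initial-data term via \eqref{eq:model-ibvp}) and in space (using the zero-flux conditions to drop boundary terms), and pass to the limit. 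The time-derivative and initial terms converge by the strong $\mathrm L^2$ convergence of $u^\eps, v^\eps$ and the weak convergence of $w^\eps$; the diffusion terms converge by two-scale convergence, and after integrating out the $y$-variable and inserting \eqref{eq:thm:u1-v1} the averaged fluxes collapse to $A_{\rm hom}\nabla u^*$ and $B_{\rm hom}\nabla v^*$ with the homogenized matrices as defined in \eqref{eq:thm:Ahom-Bhom}.

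The main obstacle I anticipate is ensuring that the singular reaction terms genuinely disappear rather than contributing a spurious limiting measure. Using the conserved linear combination is what finesses this: it is the only test-combination in which the $\mathcal O(\eps^{-1})$ terms cancel exactly at the level of the equations, so no delicate weak-$*$ limit of $\frac{u^\eps v^\eps}{\eps}$ need be computed. One should nonetheless verify carefully that this cancellation is exact — checking that the coefficients $1, -\frac{1}{\alpha}, \lambda$ annihilate each of the three reaction contributions $\frac{u^\eps v^\eps}{\eps}$, $\frac{\lambda u^\eps(1-w^\eps)}{\eps}$, and $\frac{\lambda w^\eps v^\eps}{\eps}$ simultaneously — and then confirm that the estimates \eqref{eq:apriori-reaction} are only needed for the corrector step, not the macroscopic one. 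A secondary technical point is the well-posedness and unique solvability (up to additive constants) of the cell problems \eqref{eq:thm:cell-pbs}, which follows from the uniform ellipticity of $A, B$ by the Lax--Milgram theorem on $\mathrm H^1_\#(Y)/\R$.
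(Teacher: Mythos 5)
Your proposal is correct and follows essentially the same route as the paper: the same compactness results for \eqref{eq:thm-limits}, the same conserved combination $u^\eps - \frac{v^\eps}{\alpha} + \lambda w^\eps$ to cancel the $\mathcal{O}(\eps^{-1})$ reaction terms exactly, the estimates \eqref{eq:apriori-reaction} to remove the $\mathcal{O}(1)$ remainders at the corrector level, and the standard two-scale identification of the cell problems \eqref{eq:thm:cell-pbs} and homogenized matrices \eqref{eq:thm:Ahom-Bhom} --- the only cosmetic difference being that you derive the cell problems by testing each equation separately with $\eps\,\phi(t,x)\,\theta(x/\eps)$, whereas the paper uses the single combined test function $\left(\psi+\eps\psi_1,\ \alpha^{-1}(\psi+\eps\psi_2),\ \lambda\psi\right)$ and then specialises to $\psi\equiv 0$ or $\psi_1=\psi_2\equiv 0$. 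One trivial slip: in your displayed identity the divergence term for $v^\eps$ should carry the factor $\alpha^{-1}$, i.e. $\nabla\cdot\left(A^\eps\nabla u^\eps\right)-\frac{1}{\alpha}\nabla\cdot\left(B^\eps\nabla v^\eps\right)$ (the paper's proof of Lemma \ref{lem:conserve-quantity} contains the same typo), consistent with the factor $B_{\rm hom}/\alpha$ in \eqref{eq:thm:limit-wf}.
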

\begin{rem}
The statement of Theorem \ref{thm:homogen} states that the convergences in \eqref{eq:thm-limits} hold only up to extraction of subsequences. It should, however, be noted that the limit Stefan type problem \eqref{eq:thm:limit-wf} has a unique solution -- consult \cite{Hilhorst_2003} for further details. As the limit point is a solution to a problem which is uniquely solvable, this demonstrates that the convergences in \eqref{eq:thm-limits} hold for the entire sequence.
\end{rem}
\begin{proof}[Proof of Theorem \ref{thm:homogen}]
The strong convergence of a subsequence $(u^\eps, v^\eps)$ to $(u^*, v^*)$ in $\mathrm L^2((0,\ell)\times\Omega)$ as given in \eqref{eq:thm-limits} is nothing but the result of Proposition \ref{prop:relative-compact-L2}. With regards to the two-scale limits in \eqref{eq:thm-limits} of the spatial gradients, let us consider the uniform a priori bounds in \eqref{eq:aprioi-nabla-u-v} and the uniform $\mathrm L^\infty$-bounds in Lemma \ref{lem:maxprinc}. Invoking the two-scale compactness result from Theorem \ref{thm:2scale-H1} yields the two-scale limits of the gradients as stated in \eqref{eq:thm-limits}. Finally, with regards to the weak $\mathrm L^2$-limit of the family $w^\eps$, it follows directly from the uniform bound in Lemma \ref{lem:maxprinc}.\\
The strategy of proof to arrive at the limit expression \eqref{eq:thm:limit-wf} is to test the evolution equation in \eqref{eq:model-eps} for $(u^\eps, v^\eps, w^\eps)$ by $(\psi_1^\eps, \alpha^{-1}\psi_2^\eps, \lambda\psi)$ where
\begin{align*}
\psi_1^\eps := \psi(t,x) + \eps \psi_1\left(t,x,\frac{x}{\eps}\right);
\qquad
\psi_2^\eps := \psi(t,x) + \eps \psi_2\left(t,x,\frac{x}{\eps}\right)
\end{align*}
with smooth functions $\psi(t,x), \psi_1(t,x,y), \psi_2(t,x,y)$ which are $Y$-periodic in the $y$ variable and furthermore $(\psi_1^\eps, \psi_2^\eps, \psi)(\ell,x)=0$. Note that the $\mathcal{O}(1)$ test function for all the three unknowns $u^\eps, v^\eps, w^\eps$ is the same, i.e., $\psi(t,x)$. This choice is to exploit the conservation property of the competition-diffusion system \eqref{eq:model-eps}-\eqref{eq:model-ibvp}. The conserved quantity being $u^\eps - \alpha^{-1} v^\eps + \lambda w^\eps$ (see Lemma \ref{lem:conserve-quantity} for further details).\\
The above choice of test functions yields
\begin{equation}\label{eq:wk-form-u}
\begin{aligned}
& - \iint\limits_{(0,\ell)\times\Omega} u^\eps(t,x) \partial_t \psi_1^\eps(t,x)\, {\rm d}x\, {\rm d}t
+ \iint\limits_{(0,\ell)\times\Omega} A^\eps(x) \nabla u^\eps(t,x) \cdot \nabla \psi_1^\eps(t,x)\, {\rm d}x\, {\rm d}t
\\[0.2 cm]
& + \iint\limits_{(0,\ell)\times\Omega} \frac{u^\eps}{\eps} \Big( v^\eps + \lambda \left( 1 - w^\eps \right) \Big) \psi_1^\eps(t,x)\, {\rm d}x\, {\rm d}t
- \int\limits_\Omega u^{\rm in}(x) \psi_1^\eps(0,x)\, {\rm d}x = 0
\end{aligned}
\end{equation}
for the evolution equation for $u^\eps(t,x)$. The evolution equation for $v^\eps(t,x)$ yields
\begin{equation}\label{eq:wk-form-v}
\begin{aligned}
& - \frac{1}{\alpha} \iint\limits_{(0,\ell)\times\Omega} v^\eps(t,x) \partial_t \psi_2^\eps(t,x)\, {\rm d}x\, {\rm d}t
+ \frac{1}{\alpha} \iint\limits_{(0,\ell)\times\Omega} B^\eps(x) \nabla v^\eps(t,x) \cdot \nabla \psi_2^\eps(t,x)\, {\rm d}x\, {\rm d}t
\\[0.2 cm]
& + \iint\limits_{(0,\ell)\times\Omega} \frac{v^\eps}{\eps} \Big( u^\eps + \lambda w^\eps \Big) \psi_2^\eps(t,x)\, {\rm d}x\, {\rm d}t
- \frac{1}{\alpha} \int\limits_\Omega v^{\rm in}(x) \psi_2^\eps(0,x)\, {\rm d}x = 0.
\end{aligned}
\end{equation}
Next, the evolution equation for $w^\eps(t,x)$ yields
\begin{equation}\label{eq:wk-form-w}
\begin{aligned}
& - \lambda \iint\limits_{(0,\ell)\times\Omega} w^\eps(t,x) \partial_t \psi(t,x)\, {\rm d}x\, {\rm d}t
- \lambda \int\limits_\Omega w^{\rm in}(x) \psi(0,x) \, {\rm d}x
\\[0.2 cm]
& + \lambda \iint\limits_{(0,\ell)\times\Omega} \left( \frac{u^\eps}{\eps} \left( w^\eps - 1 \right) + \frac{w^\eps v^\eps}{\eps} \right) \psi(t,x)\, {\rm d}x\, {\rm d}t
= 0. 
\end{aligned}
\end{equation}
Subtracting the expression in \eqref{eq:wk-form-v} from \eqref{eq:wk-form-u} and followed by adding the expression in \eqref{eq:wk-form-w} results in
\begin{equation}\label{eq:wk-form-uvw}
\begin{aligned}
& - \iint\limits_{(0,\ell)\times\Omega} \left( u^\eps - \frac{v^\eps}{\alpha} + \lambda w^\eps \right) \partial_t \psi_1^\eps(t,x)\, {\rm d}x\, {\rm d}t
\\[0.2 cm]
& - \int\limits_\Omega \left( u^{\rm in}(x) \psi_1^\eps(0,x) - \frac{v^{\rm in}(x)}{\alpha} \psi_2^\eps(0,x) + \lambda w^{\rm in}(x) \psi(0,x) \right)  \, {\rm d}x
\\[0.2 cm]
& + \iint\limits_{(0,\ell)\times\Omega} A^\eps(x) \nabla u^\eps(t,x) \cdot \nabla \psi_1^\eps(t,x)\, {\rm d}x\, {\rm d}t
- \frac{1}{\alpha} \iint\limits_{(0,\ell)\times\Omega} B^\eps(x) \nabla v^\eps(t,x) \cdot \nabla \psi_2^\eps(t,x)\, {\rm d}x\, {\rm d}t
\\[0.2 cm]
& + \iint\limits_{(0,\ell)\times\Omega} u^\eps \Big( v^\eps + \lambda \left( 1 - w^\eps \right) \Big) \psi_1\left(t,x,\frac{x}{\eps}\right)\, {\rm d}x\, {\rm d}t
- \iint\limits_{(0,\ell)\times\Omega} v^\eps \Big( u^\eps + \lambda w^\eps \Big) \psi_2\left(t,x,\frac{x}{\eps}\right)\, {\rm d}x\, {\rm d}t = 0.
\end{aligned}
\end{equation}
Note that the integral terms involving the nonlinear terms vanish because
\begin{align*}
& \left\vert\,\, \,
\iint\limits_{(0,\ell)\times\Omega} u^\eps \Big( v^\eps + \lambda \left( 1 - w^\eps \right) \Big) \psi_1\left(t,x,\frac{x}{\eps}\right)\, {\rm d}x\, {\rm d}t
\right\vert
\\[0.2 cm]
& \le \sup_{(t,x)\in(0,\ell)\times\Omega} \psi_1\left(t,x,\frac{x}{\eps}\right)
\iint\limits_{(0,\ell)\times\Omega} u^\eps \Big( v^\eps + \lambda \left( 1 - w^\eps \right) \Big)\, {\rm d}x\, {\rm d}t.
\end{align*}
We have
\begin{align*}
\sup_{(t,x)\in(0,\ell)\times\Omega} \psi_1\left(t,x,\frac{x}{\eps}\right)
=
\sup_{(t,x,y)\in(0,\ell)\times\Omega\times Y} \psi_1(t,x,y) <\infty
\end{align*}
by choice. Hence taking the $\eps\to0$ limit in the previous inequality says that the nonlinear terms do not contribute in the limit, thanks to the a priori estimates \eqref{eq:apriori-reaction} from Proposition \ref{prop:apriori}. Similar argument can be made to show that the other nonlinear term in the expression \eqref{eq:wk-form-uvw} does not contribute to the $\eps\to0$ limit either.\\
Getting back to the expression \eqref{eq:wk-form-uvw}, let us pass to the limit as $\eps\to0$. Using the compactness properties \eqref{eq:thm-limits} proved earlier, we arrive at
\begin{equation}\label{eq:wk-form-two-scale-limit}
\begin{aligned}
& - \iint\limits_{(0,\ell)\times\Omega} \left( u^* - \frac{v^*}{\alpha} + \lambda w^* \right) \partial_t \psi(t,x)\, {\rm d}x\, {\rm d}t
- \int\limits_\Omega \left( u^{\rm in}(x) - \frac{v^{\rm in}(x)}{\alpha} + \lambda w^{\rm in}(x) \right) \psi(0,x)  \, {\rm d}x
\\[0.2 cm]
& + \iiint\limits_{(0,\ell)\times\Omega\times Y} A(y) \Big( \nabla u^*(t,x) + \nabla_y u_1(t,x,y) \Big) \cdot \Big( \nabla \psi(t,x) + \nabla_y \psi_1(t,x,y) \Big)\, {\rm d}y \, {\rm d}x\, {\rm d}t
\\[0.2 cm]
& - \frac{1}{\alpha} \iiint\limits_{(0,\ell)\times\Omega\times Y} B(y) \Big( \nabla v^*(t,x) + \nabla_y v_1(t,x,y) \Big) \cdot \Big( \nabla \psi(t,x) + \nabla_y \psi_2(t,x,y) \Big)\, {\rm d}y \, {\rm d}x\, {\rm d}t
= 0.
\end{aligned}
\end{equation}
As is classical in periodic homogenization via two-scale convergence, while treating the diffusion terms in the expression \eqref{eq:wk-form-uvw}, we consider the products ${A^\eps}^\top\!\nabla \psi_1^\eps$ and ${B^\eps}^\top\!\nabla \psi_2^\eps$ as test functions when passing to the limit using two-scale convergence (see Definition \ref{defi:two-scale}).\\
In \eqref{eq:wk-form-two-scale-limit}, which can be treated as a weak formulation of a coupled two-scale system, let us take $\psi\equiv0$. This results in
\begin{equation*}
\begin{aligned}
& \iiint\limits_{(0,\ell)\times\Omega\times Y} A(y) \Big( \nabla u^*(t,x) + \nabla_y u_1(t,x,y) \Big) \cdot \nabla_y \psi_1(t,x,y) \, {\rm d}y \, {\rm d}x\, {\rm d}t
\\[0.2 cm]
& - \frac{1}{\alpha} \iiint\limits_{(0,\ell)\times\Omega\times Y} B(y) \Big( \nabla v^*(t,x) + \nabla_y v_1(t,x,y) \Big) \cdot \nabla_y \psi_2(t,x,y)\, {\rm d}y \, {\rm d}x\, {\rm d}t
= 0.
\end{aligned}
\end{equation*}
The above expression can be treated as the weak formulation associated with a decoupled system of periodic-boundary value problems in the $y$ variable given below
\begin{equation}
\begin{aligned}
{\rm div}_y \Big( A(y) \Big( \nabla u^*(t,x) + \nabla_y u_1(t,x,y) \Big) \Big) & = 0,
\\[0.1 cm]
{\rm div}_y \Big( B(y) \Big( \nabla v^*(t,x) + \nabla_y v_1(t,x,y) \Big) \Big) & = 0.
\end{aligned}
\end{equation}
The variables $t$ and $x$ are treated as parameters. Note that the linearity of both the equations in the above decoupled system implies that we can solve for the unknowns $u_1$ and $v_1$ with the representation \eqref{eq:thm:u1-v1} with $(\omega_i, \chi_i)_{1\le i\le d}$ solving the decoupled system of periodic-boundary value problems \eqref{eq:thm:cell-pbs}. 

Our next task is to derive the homogenized limit \eqref{eq:thm:limit-wf} for the triple $(u^\ast, v^\ast, w^\ast)$. Hence, in the weak formulation of the coupled two-scale system \eqref{eq:wk-form-two-scale-limit}, take the test functions $\psi_1 = \psi_2 \equiv 0$. This yields
\begin{equation}\label{eq:wk-form-two-scale-limit-bis}
\begin{aligned}
& - \iint\limits_{(0,\ell)\times\Omega} \left( u^* - \frac{v^*}{\alpha} + \lambda w^* \right) \partial_t \psi(t,x)\, {\rm d}x\, {\rm d}t
\\[0.2 cm]
& - \int\limits_\Omega \left( u^{\rm in}(x) - \frac{v^{\rm in}(x)}{\alpha} + \lambda w^{\rm in}(x) \right) \psi(0,x)  \, {\rm d}x
\\[0.2 cm]
& + \iiint\limits_{(0,\ell)\times\Omega\times Y} A(y) \Big( \nabla u^*(t,x) + \nabla_y u_1(t,x,y) \Big) \cdot \nabla \psi(t,x) \, {\rm d}y \, {\rm d}x\, {\rm d}t
\\[0.2 cm]
& - \frac{1}{\alpha} \iiint\limits_{(0,\ell)\times\Omega\times Y} B(y) \Big( \nabla v^*(t,x) + \nabla_y v_1(t,x,y) \Big) \cdot \nabla \psi(t,x) \, {\rm d}y \, {\rm d}x\, {\rm d}t
= 0.
\end{aligned}
\end{equation}
Note that substituting for $u_1(t,x,y)$ using \eqref{eq:thm:u1-v1} in the third line of the above expression yields
\begin{align*}
& \iiint\limits_{(0,\ell)\times\Omega\times Y} A(y) \Big( \nabla u^*(t,x) + \nabla_y u_1(t,x,y) \Big) \cdot \nabla \psi(t,x) \, {\rm d}y \, {\rm d}x\, {\rm d}t
\\
& = \iiint\limits_{(0,\ell)\times\Omega\times Y} A(y) \left( \nabla u^*(t,x) + \nabla_y \left( \sum_{i=1}^d \omega_i(y) \frac{\partial u^*}{\partial x_i}(t,x) \right) \right) \cdot \nabla \psi(t,x) \, {\rm d}y \, {\rm d}x\, {\rm d}t
\\
& = \iint\limits_{(0,\ell)\times\Omega} A_{\rm hom} \nabla u^*(t,x) \cdot \nabla \psi(t,x) \, {\rm d}x\, {\rm d}t
\end{align*}
with the matrix $A_{\rm hom}$ denoting the homogenized matrix whose elements are given by \eqref{eq:thm:Ahom-Bhom}. Similar computations can be done for the term on the fourth line of \eqref{eq:wk-form-two-scale-limit-bis} yielding an expression with the homogenized matrix $B_{\rm hom}$. Hence we arrive at the expression \eqref{eq:thm:limit-wf} for the limit point $(u^*,v^*,w^*)$.
\end{proof}

\subsection{Limit problem}
In the previous subsection, the solution family $(u^\eps,v^\eps,w^\eps)$ is such that the limit point $(u^*,v^*,w^*)$ solves certain integral expression \eqref{eq:thm:limit-wf}. Here we formulate \eqref{eq:thm:limit-wf} as a weak form of a certain initial-boundary value problem:
\begin{equation}\label{eq:limit:equation-Z-phi-Z}
\begin{aligned}
\partial_t Z - \nabla \cdot \Big( \mathcal{D}(\phi(Z)) \nabla \phi(Z) \Big) = 0
\end{aligned}
\end{equation}
where $\mathcal{D}:\R\to\R^{d\times d}$ defined as
\begin{equation*}
\mathcal{D}(s)
:=
\left\{
\begin{array}{cc}
A_{\rm hom} & \mbox{ for }s>0
\\[0.2 cm]
B_{\rm hom} & \mbox{ for }s<0
\end{array}\right.
\end{equation*}
where $A_{\rm hom}$ and $B_{\rm hom}$ are the homogenized matrices given by \eqref{eq:thm:Ahom-Bhom}.\\
The real-valued function $\phi$ is defined as
\begin{equation*}
\phi(s)
:=
\left\{
\begin{array}{cl}
s & \mbox{ for }s\in (-\infty, 0)
\\[0.2 cm]
0 & \mbox{ for }s\in [0,\lambda]
\\[0.2 cm]
s-\lambda & \mbox{ for }s\in(\lambda,\infty).
\end{array}\right.
\end{equation*}
Now take 
\begin{align*}
Z := u^* - \frac{v^*}{\alpha} + \lambda w^*
\end{align*}
Remark that because of the segregation property of the limit point $(u^*,v^*,w^*)$, we have
\begin{align*}
\phi(Z) = u^* - \frac{v^*}{\alpha}
\end{align*}
and 
\begin{equation*}
\mathcal{D}(\phi(Z))
:=
\left\{
\begin{array}{cc}
A_{\rm hom} & \mbox{ when }u^*>0
\\[0.2 cm]
B_{\rm hom} & \mbox{ when }v^*>0.
\end{array}\right.
\end{equation*}
Rewriting the limit equation \eqref{eq:limit:equation-Z-phi-Z} as a two-phase Stefan problem with a Stefan type condition on the interface between the two segregated species is similar to the calculations done in \cite[Theorem 3.7]{Hilhorst_2001}. For readers' convenience, we recall the details. Let us set
\begin{align*}
\Omega_+(t) & := \Big\{ x\in\Omega \mbox{ such that }\phi(Z(t,x)) > 0\Big\}
\\
\Omega_-(t) & := \Big\{ x\in\Omega \mbox{ such that }\phi(Z(t,x)) < 0\Big\}
\\
\Gamma(t) & := \Omega \setminus \left( \Omega_+(t) \cup \Omega_-(t) \right).
\end{align*}
\begin{thm}[Two-phase Stefan problem formulation]\label{thm:two-phase-formulation}
Let $Z$ be the unique solution to \eqref{eq:limit:equation-Z-phi-Z} for a given initial datum. Suppose $\Gamma(t)$ is smooth and satisfies $\Gamma(t)\cap\partial\Omega =\emptyset$ for all $t\in[0,\ell]$. Let $\nu$ be the unit normal vector on $\Gamma(t)$ oriented from $\Omega_+(t)$ to $\Omega_-(t)$. Assume further that $\Gamma(t)$ moves smoothly with a velocity ${\bf V}(t)$. Then the functions
\[
u:= \phi(Z)_+\qquad \mbox{ and }\quad v:=\alpha\phi(Z)_-
\]
along with $\Gamma(t)$ satisfy
\begin{equation}\label{eq:Two-phase-stefan-form}
\left\{
\begin{aligned}
\partial_t u & = {\rm div}\Big( A_{\rm hom} \nabla u \Big) \qquad \qquad \qquad \quad \mbox{ in }\Omega_+(t)
\\
\partial_t v & = {\rm div}\Big( B_{\rm hom} \nabla v \Big) \qquad \qquad \qquad \quad \mbox{ in }\Omega_-(t)
\\
\lambda {\bf V}(t)\cdot {\bf \nu} & = -A_{\rm hom}\nabla u\cdot {\bf \nu} - B_{\rm hom} \nabla v\cdot {\bf \nu} \quad \mbox{ on }\Gamma(t)
\\
u & = v = 0 \qquad \qquad \qquad \qquad \qquad \mbox{ on }\Gamma(t)
\\
A_{\rm hom} \nabla u \cdot {\bf n} & = B_{\rm hom} \nabla u \cdot {\bf n} = 0 \qquad \quad \qquad \mbox{ on }\partial\Omega
\end{aligned}
\right.
\end{equation}
for $t\in(0,\ell]$.
\end{thm}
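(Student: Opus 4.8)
\emph{Proof proposal.} The plan is to read off each line of \eqref{eq:Two-phase-stefan-form} from the single weak equation \eqref{eq:thm:limit-wf} (equivalently, the distributional form of \eqref{eq:limit:equation-Z-phi-Z}), using the segregation of the limit together with the assumed smoothness of the free boundary $\Gamma(t)$. First I would pin down the objects in each phase. In $\Omega_+(t)$ the species $\mathcal S_1$ is present, so segregation together with the identity $\phi(Z)=u^*-v^*/\alpha$ forces $v^*=0$ and $w^*=1$; hence $Z=u^*+\lambda$, $\phi(Z)=u^*$, $u=\phi(Z)_+=u^*$, and $\mathcal D(\phi(Z))=A_{\rm hom}$. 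In $\Omega_-(t)$ one has instead $u^*=0$ and $w^*=0$, so $Z=-v^*/\alpha$, $\phi(Z)=-v^*/\alpha$, $v=\alpha\,\phi(Z)_-=v^*$, and $\mathcal D(\phi(Z))=B_{\rm hom}$. These identifications convert the single scalar $Z$ into the pair $(u,v)$ on the two sides.

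Next I would extract the interior equations by localisation. Testing \eqref{eq:thm:limit-wf} with $\psi\in C_c^\infty$ supported inside $\Omega_+(t)$, where $\phi(Z)>0$ and the relevant branch of $\phi$ is affine so that $\partial_t Z=\partial_t u$, the weak equation collapses after an interior integration by parts to $\partial_t u=\operatorname{div}(A_{\rm hom}\nabla u)$. Symmetrically, support inside $\Omega_-(t)$ gives $\partial_t Z=-\alpha^{-1}\partial_t v$ and $\operatorname{div}(\mathcal D\nabla\phi(Z))=-\alpha^{-1}\operatorname{div}(B_{\rm hom}\nabla v)$, so that after multiplication by $-\alpha$ one recovers $\partial_t v=\operatorname{div}(B_{\rm hom}\nabla v)$. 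The homogeneous Dirichlet conditions $u=v=0$ on $\Gamma(t)$ are then immediate: since $\nabla\phi(Z)=\nabla u^*-\alpha^{-1}\nabla v^*\in L^2$, the function $\phi(Z)$ is continuous across $\Gamma(t)$, and by the very definition of $\Gamma(t)$ it vanishes there, whence $u=\phi(Z)_+=0$ and $v=\alpha\,\phi(Z)_-=0$.

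The substantive step is the Stefan condition on the moving interface. Here I would \emph{not} localise away from $\Gamma(t)$, but instead integrate \eqref{eq:thm:limit-wf} by parts in space-time over the two regions $Q_{\pm}:=\{(t,x):x\in\Omega_{\pm}(t)\}$ separately, applied to the space-time field $(-Z,\mathbf q)$ with flux $\mathbf q:=\mathcal D(\phi(Z))\nabla\phi(Z)$. The bulk terms $-\partial_t Z+\operatorname{div}\mathbf q$ vanish in each $Q_\pm$ by the interior equations just derived, and the contributions on $\partial\big((0,\ell)\times\Omega\big)$ drop out for compactly supported $\psi$. What survives is a single integral over the space-time hypersurface $\mathcal S:=\{(t,x):x\in\Gamma(t)\}$, whose unnormalised normal pointing from $Q_+$ into $Q_-$ has spatial part $\nu$ and temporal part $-\mathbf V(t)\cdot\nu$. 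Requiring this surface integral to vanish for all $\psi$ forces a Rankine--Hugoniot balance between the jump of $Z$ and the jump of the conormal flux $\mathbf q\cdot\nu$ across $\Gamma(t)$. Because $\phi(Z)$ is continuous while $Z$ itself jumps by exactly $\lambda$ --- it tends to $\lambda$ from $\Omega_+(t)$ and to $0$ from $\Omega_-(t)$ --- this balance expresses the kinetic term $\lambda\,\mathbf V(t)\cdot\nu$ through the one-sided conormal fluxes of $u$ and $v$ on $\Gamma(t)$, i.e.\ the interface relation in \eqref{eq:Two-phase-stefan-form}, with the jump magnitude $\lambda$ playing the role of the latent heat. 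Finally, the no-flux conditions on $\partial\Omega$ are obtained by repeating the localisation step with test functions that do not vanish on $\partial\Omega$ and reading off the resulting boundary integral.

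I expect the interface computation of the third step to be the main obstacle, for two reasons. First, the orientation and normalisation bookkeeping must be done with care: one has to compute the space-time normal to $\mathcal S$ correctly --- the factor $\mathbf V\cdot\nu$ in its temporal component is exactly what turns the jump of $Z$ into the velocity term $\lambda\,\mathbf V\cdot\nu$ --- and one must keep track of the $\alpha$-factor attached to $\phi(Z)$ in $\Omega_-(t)$ when re-expressing the flux jump in terms of $\nabla v$. Second, the manipulation presupposes enough regularity of $Z$ near $\Gamma(t)$ for the one-sided traces of $\nabla u$, $\nabla v$ and the jump of $Z$ to be meaningful; this is precisely what the standing hypotheses on $\Gamma(t)$ (smooth, moving smoothly with velocity $\mathbf V(t)$, and $\Gamma(t)\cap\partial\Omega=\emptyset$) are there to guarantee, so that \eqref{eq:thm:limit-wf} may legitimately be integrated by parts on each phase. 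Modulo these points, the argument is the classical passage from the enthalpy formulation (here $Z$) of a two-phase Stefan problem to its free-boundary form, as in \cite[Theorem 3.7]{Hilhorst_2001}.
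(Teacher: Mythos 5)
Your proposal is correct and follows essentially the same route as the paper, which offers no independent argument for this theorem but defers to the classical enthalpy-to-free-boundary computation of \cite[Theorem 3.7]{Hilhorst_2001}: precisely the localisation of the weak form \eqref{eq:thm:limit-wf} in each phase, continuity and vanishing of $\phi(Z)$ on $\Gamma(t)$, and the space--time Rankine--Hugoniot balance across $\mathcal{S}$ with jump $[Z]=\lambda$ acting as latent heat, all of which you reconstruct faithfully. One remark on the $\alpha$-bookkeeping you yourself flag as the delicate point: since the one-sided flux in $\Omega_-(t)$ is $\mathbf{q}=B_{\rm hom}\nabla\phi(Z)=-\alpha^{-1}B_{\rm hom}\nabla v$ under the theorem's normalisation $v=\alpha\,\phi(Z)_-=v^*$, your balance actually yields $\lambda\,\mathbf{V}(t)\cdot\nu=-A_{\rm hom}\nabla u\cdot\nu-\alpha^{-1}B_{\rm hom}\nabla v\cdot\nu$, so the third line of \eqref{eq:Two-phase-stefan-form} as printed differs by a factor $\alpha^{-1}$ (a normalisation slip in the statement rather than a flaw in your argument, which would be consistent if one instead set $v:=\phi(Z)_-$).
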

\begin{rem}
The Stefan condition in \eqref{eq:Two-phase-stefan-form} says that the normal component of the velocity of the interface is nothing but the difference in contribution of the \emph{fluxes} in the normal direction on to the interface, i.e.,
\begin{align*}
\lambda {\bf V}(t)\cdot {\bf \nu} = -A_{\rm hom}\nabla u^*\cdot {\bf \nu} + B_{\rm hom} \nabla v^*\cdot ({\bf -\nu} ).
\end{align*}
The operative term in the above sentence is \emph{fluxes}. So, it is not just the diffusion coefficient that counts. But the flux in general. In this light, we remark that a recent study \cite{Girardin_2015}  focusses on a similar, albeit simplified, setting to the one considered in this work and concludes that ``unity is not strength'', i.e., that up to a multiplicative constant the more diffusive species invades the habitat of the other. A rigorous proof of similar results in the present setting, i.e., that oscillations in diffusivities hinder the ability of species to invade is beyond the scope of the present work however, in section \ref{sec:ecology} we conduct some numerical simulations that suggest that indeed oscillations are detrimental to invasion and that the direction of motion of the interface may be reversed if the oscillations are sufficiently rapid.
\end{rem}

\subsection{Homogenized coefficients}
Interesting aspects of the result in Theorem \ref{thm:homogen} are the cell problems \eqref{eq:thm:cell-pbs} and the expressions for the effective coefficients \eqref{eq:thm:Ahom-Bhom}. Below, we state a classical result from the theory of periodic homogenization for the conductivity problem. A proof of this result can be found in any standard text on homogenization (see for e.g., \cite{Bensoussan_1978, Jikov_1994}).
\begin{thm}\label{thm:conduct-homogen}
Let $f^\eps$ be the solution to the following boundary value problem
\begin{equation}\label{eq:conduct-eps-model}
\begin{aligned}
-\nabla \cdot \Big( \mathcal{A}\left(\frac{x}{\eps}\right) \nabla f^\eps \Big) & = g \quad \mbox{ in }\Omega
\\[0.2 cm]
f^\eps & = 0 \quad \mbox{ on }\partial\Omega
\end{aligned}
\end{equation}
with $\mathcal{A}\left(\frac{x}{\eps}\right)$ being an $\eps$-periodic bounded positive-definite matrix and the source term $g\in\mathrm L^2(\Omega)$. Then, the family $f^\eps(x)$ is such that
\begin{equation}
\begin{aligned}
f^\eps & \relbar\joinrel\relbar\joinrel\relbar\joinrel\relbar\joinrel\to f^* \mbox{ strongly in }\mathrm L^2(\Omega)
\\
\nabla f^\eps & \twoscale \nabla_x f^* + \nabla_y f_1
\end{aligned}
\end{equation}
with the corrector $f_1(x,y)$ having the representation
\begin{align*}
f_1(x,y) = \sum_{i=1}^d \eta_i(y) \frac{\partial f^*}{\partial x_i}(x)
\end{align*}
and the $(\eta_i)_{1\le i\le d}$ solving the periodic-boundary value problems
\begin{align}\label{eq:conduct-cell-pb}
\nabla_y \cdot \Big( \mathcal{A}(y) \Big( {\bf e}_i + \nabla_y \eta_i (y) \Big) \Big) = 0 \quad \mbox{ in }Y
\end{align}
for each $i\in\{1,\dots,d\}$. Furthermore the limit point $f^*$ uniquely solves the boundary value problem
\begin{equation}
\begin{aligned}
-\nabla \cdot \Big( \mathcal{A}_{\rm hom}\nabla f^* \Big) & = g \quad \mbox{ in }\Omega
\\[0.2 cm]
f^* & = 0 \quad \mbox{ on }\partial\Omega
\end{aligned}
\end{equation}
with the homogenized coefficient $\mathcal{A}_{\rm hom}$ whose elements are given below
\begin{align}\label{eq:conduct-Ahom}
\left[ \mathcal{A}_{\rm hom}\right]_{ij} = \int\limits_Y \mathcal{A}(y) \Big( {\bf e}_j + \nabla_y \eta_j(y) \Big)\cdot {\bf e}_i\, {\rm d}y
\end{align}
for $i,j\in\{1,\dots,d\}$.
\end{thm}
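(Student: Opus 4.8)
The plan is to follow the standard energy-plus-two-scale-convergence route, mirroring the argument already carried out in the proof of Theorem~\ref{thm:homogen}. First I would establish the uniform a priori bound. For each fixed $\eps$, existence and uniqueness of $f^\eps\in\mathrm H^1_0(\Omega)$ follow from the Lax--Milgram lemma applied to the weak formulation
\begin{align*}
\int\limits_\Omega \mathcal{A}\!\left(\frac{x}{\eps}\right)\nabla f^\eps\cdot\nabla\psi\,{\rm d}x = \int\limits_\Omega g\,\psi\,{\rm d}x\qquad\forall\psi\in\mathrm H^1_0(\Omega).
\end{align*}
Testing with $\psi=f^\eps$ and using the ellipticity bound $\zeta^\top\mathcal{A}\zeta\ge\beta|\zeta|^2$ together with the Poincar\'e inequality yields $\|f^\eps\|_{\mathrm H^1_0(\Omega)}\le C$ with $C$ independent of $\eps$.

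With this uniform bound in hand, I would invoke the two-scale compactness result of Theorem~\ref{thm:2scale-H1} (the time variable is absent here, so it simply plays no role) to extract a subsequence and limits $f^*\in\mathrm H^1_0(\Omega)$ and $f_1\in\mathrm L^2(\Omega;\mathrm H^1_\#(Y)/\R)$ with $f^\eps\rightharpoonup f^*$ weakly in $\mathrm H^1_0(\Omega)$ and $\nabla f^\eps\twoscale\nabla_x f^*+\nabla_y f_1$. The Rellich--Kondrachov theorem upgrades the weak $\mathrm H^1$ convergence to the strong $\mathrm L^2(\Omega)$ convergence asserted in the statement, and the weak limit retains the zero trace so that $f^*\in\mathrm H^1_0(\Omega)$.

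The heart of the argument is the passage to the two-scale limit in the weak formulation using oscillating test functions $\psi^\eps(x):=\psi(x)+\eps\,\psi_1\!\left(x,\tfrac{x}{\eps}\right)$ with $\psi\in C_c^\infty(\Omega)$ and $\psi_1$ smooth and $Y$-periodic in $y$, so that $\nabla\psi^\eps=\nabla\psi+\nabla_y\psi_1\!\left(x,\tfrac{x}{\eps}\right)+\eps\nabla_x\psi_1$. The difficulty is that $\nabla f^\eps$ only converges two-scale \emph{weakly}, so one cannot simply multiply two weakly convergent factors; the standard device, exactly as in the proof of Theorem~\ref{thm:homogen}, is to regard $\mathcal{A}^\top\!\left(\tfrac{x}{\eps}\right)\bigl(\nabla\psi+\nabla_y\psi_1\bigr)$ as an admissible oscillating test function in Definition~\ref{defi:two-scale} against the two-scale convergent family $\nabla f^\eps$, the residual $\eps\nabla_x\psi_1$ term being $\mathcal O(\eps)$ and hence negligible. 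This yields the two-scale variational identity
\begin{align*}
\iint\limits_{\Omega\times Y}\mathcal{A}(y)\bigl(\nabla_x f^*+\nabla_y f_1\bigr)\cdot\bigl(\nabla\psi+\nabla_y\psi_1\bigr)\,{\rm d}y\,{\rm d}x=\int\limits_\Omega g\,\psi\,{\rm d}x.
\end{align*}

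Finally I would decouple this identity precisely as before. Taking $\psi\equiv0$ isolates the $y$-cell problem, whose linearity in $\nabla_x f^*$ forces the ansatz $f_1(x,y)=\sum_{i=1}^d\eta_i(y)\,\partial_{x_i}f^*(x)$ with $\eta_i$ solving \eqref{eq:conduct-cell-pb}; taking $\psi_1\equiv0$ and substituting this ansatz, the $y$-integration collapses the leading term into $\mathcal{A}_{\rm hom}\nabla f^*$ with $\mathcal{A}_{\rm hom}$ given by \eqref{eq:conduct-Ahom}, producing the weak form of the homogenized problem. Uniqueness of $f^*$ follows from Lax--Milgram once one checks that $\mathcal{A}_{\rm hom}$ is positive definite, a routine energy computation using the mean-zero property of the corrector gradients; since the limit problem is then uniquely solvable, the whole sequence converges and no subsequence need be retained. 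I expect the admissibility of the oscillating test function in the limit passage to be the only genuinely delicate point, everything else being standard elliptic and periodic-homogenization bookkeeping.
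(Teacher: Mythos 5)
Your proposal is correct and takes essentially the same approach as the paper: the paper itself omits a proof of Theorem~\ref{thm:conduct-homogen}, referring to the standard texts \cite{Bensoussan_1978, Jikov_1994}, but your argument is precisely the classical two-scale convergence proof and mirrors, step for step, the paper's own proof of Theorem~\ref{thm:homogen} (uniform energy bound, two-scale compactness via Theorem~\ref{thm:2scale-H1}, oscillating test functions $\psi+\eps\psi_1(x,\tfrac{x}{\eps})$ with $\mathcal{A}^\top\bigl(\nabla\psi+\nabla_y\psi_1\bigr)$ absorbed into the admissible test function, then decoupling by taking $\psi\equiv0$ and $\psi_1\equiv0$ in turn). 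The only point not already spelled out in the paper's template, the positive definiteness of $\mathcal{A}_{\rm hom}$ needed for Lax--Milgram and hence for whole-sequence convergence, is correctly flagged in your write-up and is indeed routine.
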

Remark that if the diffusion coefficient $A(y)$ in Theorem \ref{thm:homogen} equals the conductivity coefficient $\mathcal{A}(y)$ in Theorem \ref{thm:conduct-homogen}, then the periodic-boundary value cell problem for $\omega_i(y)$ in \eqref{eq:thm:cell-pbs} is exactly the same as that for $\eta_i(y)$ in \eqref{eq:conduct-cell-pb} for each $i\in\{1,\dots,d\}$. Furthermore, the homogenized coefficient $A_{\rm hom}$ in \eqref{eq:thm:Ahom-Bhom} matches with the homogenized coefficient $\mathcal{A}_{\rm hom}$ in \eqref{eq:conduct-Ahom}. In Appendix \ref{sec:one-dim-layer-material}, we record some interesting aspects about the homogenized coefficient $\mathcal{A}_{\rm hom}$ in one dimensional and layered material settings.

\section{Numerical investigations of long time behaviour}\label{sec:ecology}
To illustrate the effect of high frequency oscillations, we report on the results of some numerical experiments. 
We consider a minor modification to the system  \eqref{eq:model-eps} in which we include logistic growth terms for both the diffusible species, i.e., we consider a system of the following form in $(0,\ell)\times\Omega$:
\begin{equation}\label{eq:model-eps-log}
\begin{aligned}
\partial_t u^\eps - \nabla \cdot \Big( A^\eps(x) \nabla u^\eps \Big) + \frac{u^\eps}{\eps} \Big( v^\eps + \lambda \left( 1 - w^\eps \right) \Big)-ru^\eps(1-u^\eps) & = 0,
\\[0.2 cm]
\partial_t v^\eps - \nabla \cdot \Big( B^\eps(x) \nabla v^\eps \Big) + \alpha \frac{v^\eps}{\eps} \Big( u^\eps + \lambda w^\eps \Big) -rv^\eps(1-v^\eps) & = 0,
\\[0.2 cm]
\partial_t w^\eps + \frac{u^\eps}{\eps} \left( w^\eps - 1 \right) + \frac{w^\eps v^\eps}{\eps} & = 0,
\end{aligned}
\end{equation}
with initial data and boundary conditions as in \eqref{eq:model-ibvp}. We note that the analytical results in the previous sections remain valid for the system \eqref{eq:model-eps-log} and the limiting problem we obtain in the $\eps\to0$ limit corresponds to \eqref{eq:thm:limit-wf} with the additional logistic source terms.  We note that \eqref{eq:model-eps-log} corresponds to a so-called competitive Lotka-Volterra system in which the variables $u^\eps$ and $v^\eps$ are interpreted as the population densities of 
two competing species, the variable $w^\eps$ can also be given an ecological interpretation  in terms of the cost to the species of colonising the others habitat \cite{Hilhorst_2001}. An interesting question from the ecological perspective is the determination of the long-time behaviour of solutions. Girardin and Nadin \cite{Girardin_2015} study the above system in the case of constant diffusion coefficients, zero latent heat and strong competition ($\eps\ll 1, \lambda=0$)  in one spatial dimension and they conclude that, up to a multiplicative constant, the more diffusive species invades the habitat of the less diffusive species.

In this section we present some numerical simulations of \eqref{eq:model-eps-log} that illustrate the influence of oscillations on the long-time behaviour of solutions to \eqref{eq:model-eps-log} in light of our results in the previous sections. To approximate \eqref{eq:model-eps-log} we employ an IMEX Euler method for the time discretisation and piecewise linear finite elements for spatial discretisation \cite{Lakkis2013}. To this end, we construct a triangulation $\mathcal{T}$ of the spatial domain $\Omega$, which divides $\Omega$ into a finite number of non-degenerate and non-overlapping simplices such that the triangulation contains no hanging nodes. We denote by $\mathbb{V}$ the space of all continuous piecewise linear functions on $\mathcal{T}$. Let $N_t$ be a  positive integer, we define the uniform (for simplicity) timestep $\tau=T/N_t$, with $T$ being the end time of simulations. For each $l\in\lbrace0,1,\dots,N_t\rbrace$ we define $t^l := l \tau$. 

Let $\lbrace {\bf x}_i\rbrace_{i=0,\dots,N_h}$ denote the set of vertices of the triangulation $\mathcal{T}$. We look for approximations  $U_h^l:=U_h(t^l)\in\mathbb{V}$, $V_h^l:=V_h(t^l)\in\mathbb{V}$ and  $W_h^l:=W_h(t^l)\in\mathbb{V}$ $l=0,\dots,N_t$ (where we have dropped the $\eps$ for notational ease).  One step of the numerical scheme is as follows: given  $U_h^{l-1}, V_h^{l-1}, W_h^{l-1}\in\mathbb{V}$, find $U_h^{l}$ and $V_h^{l}\in\mathbb{V}$ such that, for all $\Phi\in \mathbb{V}$,
\begin{align*}
\int_\Omega \frac{U_h^{l}\Phi}{\tau}+A^\eps(x)\nabla U_h^{l} \cdot \nabla \Phi + & U_h^{l}\left(\eps^{-1}\left(V_h^{l-1}+\lambda\left(1-W_h^{l-1}\right)\right)-r\left(1-U_h^{l-1}\right)\right)
\\
& = \int_\Omega \frac{U_h^{l-1}\Phi}{\tau}
\end{align*}
and
\begin{align*}
\int_\Omega \frac{V_h^{l}\Phi}{\tau}+B^\eps(x)\nabla V_h^{l}\cdot\nabla \Phi +& V_h^{l}\left(\frac{\alpha}{\eps}\left(U_h^{l-1}+\lambda W_h^{l-1}\right)-r\left(1-V_h^{l-1}\right)\right)
\\
&=\int_\Omega \frac{V_h^{l-1}\Phi}{\tau}.
\end{align*}
To complete the scheme, an ODE for the nodal values of $W_h$ is solved with a semi-implicit Euler method at each node of the triangulation.

The explicit treatment of the nonlinear terms leads to the equations being decoupled, so that only linear systems must be solved at each time-step. We have investigated both Picard linearisation and Newton linearisation; the results remain qualitatively unchanged. Our implementation makes use of the ALBERTA C finite element toolbox \cite{schmidt2005design}. 

\subsection{1D Simulations}\label{ssec:testcase-1d}
At a first instance, we consider  \eqref{eq:model-eps-log}  in a one-dimensional setting with $\Omega=(0,1)$. We fix $r=50$, $\lambda=1$ and $\alpha=1.1$ thus the species $u^\eps$ enjoys a competitive advantage over the species $v^\eps$.  
For the initial conditions we set 
\[
u^{\rm in}(x)=\chi_{\{x<0.5\}},\, v^{\rm in}(x)=\chi_{\{x\geq0.5\}},\,  w^{\rm in}(x)= u^{\rm in}(x)\quad \mbox{ for }x\in\Omega.
\]
For the simulations we employ a uniform mesh with 16384 degrees of freedom (a fine mesh is needed to resolve the highly oscillatory diffusion coefficient) and a fixed timestep of $10^{-3}$.

Figure \ref{fig:1d_const} shows snapshots of the time evolution in the case of no oscillations in either diffusion coefficient, i.e., $A(y)=B(y)=2$ with  $\eps=10^{-3}$. We observe spatial segregation due to the strong competition and due to its competitive advantage the species $u^\eps$ invades the habitat of the species $v^\eps$.  The long time behaviour being complete colonisation of $\Omega$ by the species $u^\eps$ and the extinction of the species  $v^\eps$.
\begin{figure}[htbp]
{\includegraphics[trim = 0mm 0mm 200mm 0mm,  clip, width=.08\linewidth]{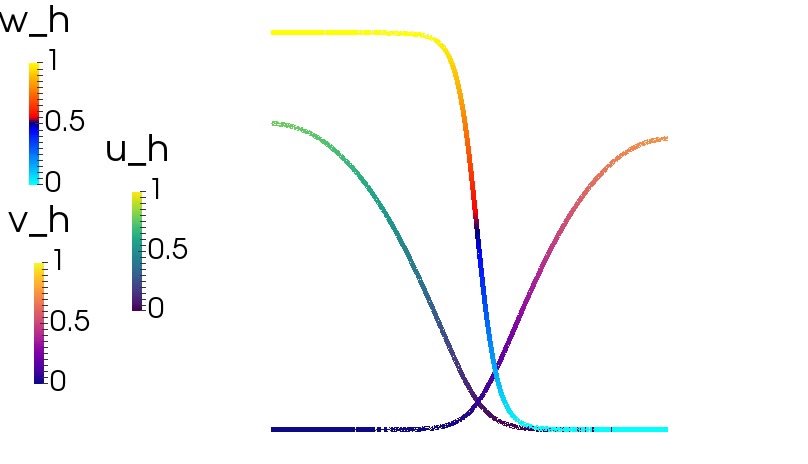}}
\fbox{\includegraphics[trim = 90mm 0mm 50mm 0mm,  clip, width=.15\linewidth]{const_1.jpg}}
\fbox{\includegraphics[trim = 90mm 0mm 50mm 0mm,  clip, width=.15\linewidth]{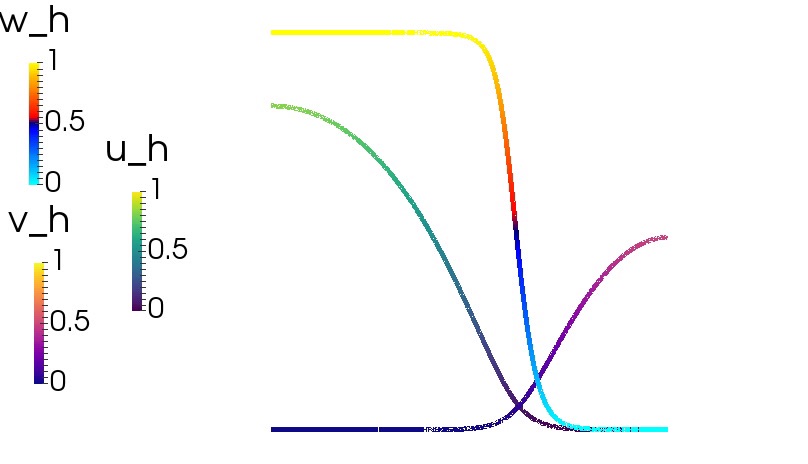}}
\fbox{\includegraphics[trim = 90mm 0mm 50mm 0mm,  clip, width=.15\linewidth]{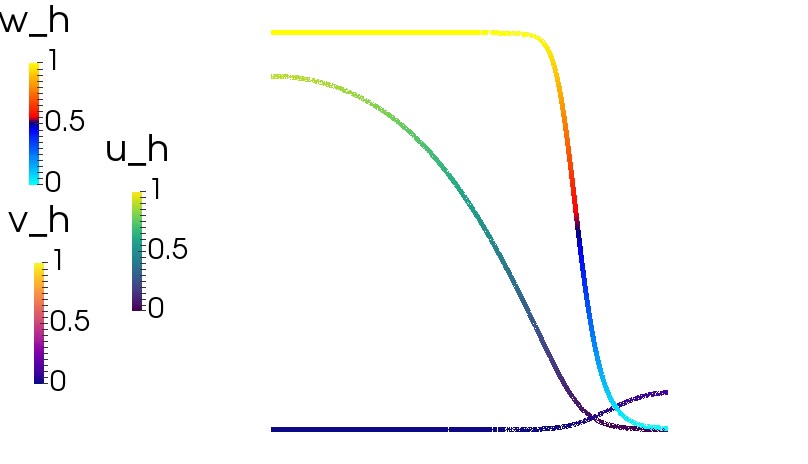}}
\fbox{\includegraphics[trim = 90mm 0mm 50mm 0mm,  clip, width=.15\linewidth]{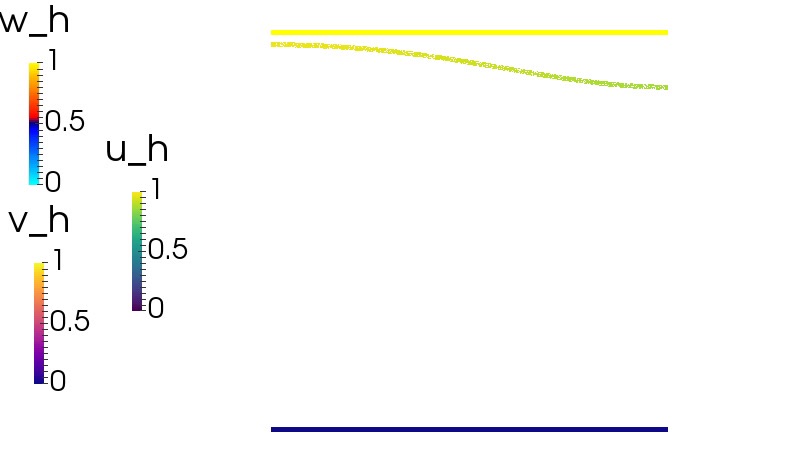}}
\fbox{\includegraphics[trim = 90mm 0mm 50mm 0mm,  clip, width=.15\linewidth]{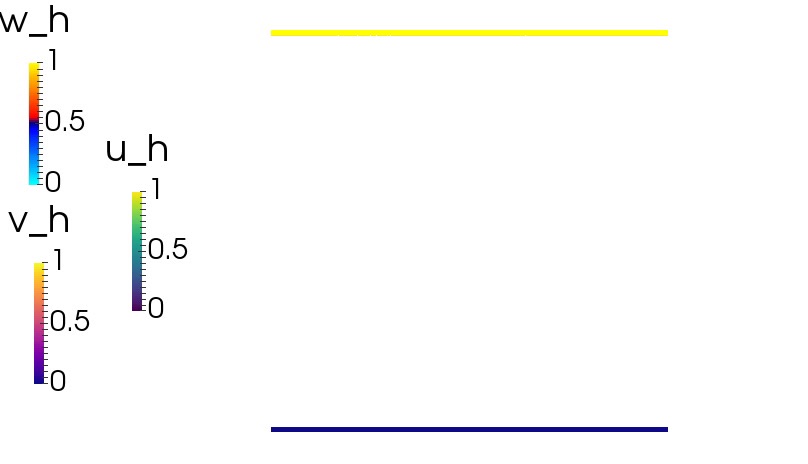}}
\caption{Snapshots of the solution to  \eqref{eq:model-eps-log} with constant diffusion coefficients $A(y)=B(y)=2$ and $\alpha=1.1$ at times $0.1, 0.3, 0.4, 0.5$ and $0.6$ reading from left to right. We observe that due to the competitive advantage species $u^\eps$ invades the habitat of species $v^\eps$.}
\label{fig:1d_const}
\end{figure}

In order to illustrate the effect of high frequency oscillations we now consider the periodic diffusivities associated with the two population densities in \eqref{eq:model-eps-log}  taken to be
\begin{equation}\label{1d_Diff}
\left.
\begin{array}{rl}
A(y) & = 2 + 1.5\sin(2\pi y)
\\[0.2 cm]
B(y) & = 2
\end{array}
\right\}
\quad
\mbox{ for }y\in Y.
\end{equation}
The associated homogenized coefficients in Theorem \ref{thm:homogen} (see in particular \eqref{eq:thm:Ahom-Bhom} and \eqref{eq:1d-homogen-coeff}) are given as
\begin{equation*}
\begin{array}{rl}
A_{\rm hom} & = 1.3229
\\[0.2 cm]
B_{\rm hom} & = 2.
\end{array}
\end{equation*}
Note that the above choice of periodic diffusivities are such that
\begin{equation*}
\begin{array}{cl}
A(y) > B(y) & \mbox{ for }y\in \left(0,\frac{1}{2}\right)
\\[0.2 cm]
A(y) < B(y) & \mbox{ for }y\in \left(\frac{1}{2}, 1\right),
\end{array}
\end{equation*}
i.e., the dominance of one diffusion coefficient over the other inside a periodicity cell is symmetric. But the high frequency oscillations in the homogenization limit lead to coefficients such that $A_{\rm hom} < B_{\rm hom}$ everywhere in the spatial domain. Figure \ref{fig:1d_osc} shows snapshots of the time evolution in the case of diffusivities as in \eqref{1d_Diff} for varying $\eps$. We see that for sufficiently small $\eps$ the long-time behaviour of the system changes from the constant diffusion case 
with the species with constant diffusivity invading the habitat of the species with oscillating diffusivity even though the species with oscillating diffusivity has a competitive advantage over the other.

\begin{figure}[htbp]
{\includegraphics[trim = 0mm 0mm 200mm 0mm,  clip, width=.08\linewidth]{const_1.jpg}}
\fbox{\includegraphics[trim = 60mm 0mm 20mm 0mm,  clip, width=.15\linewidth]{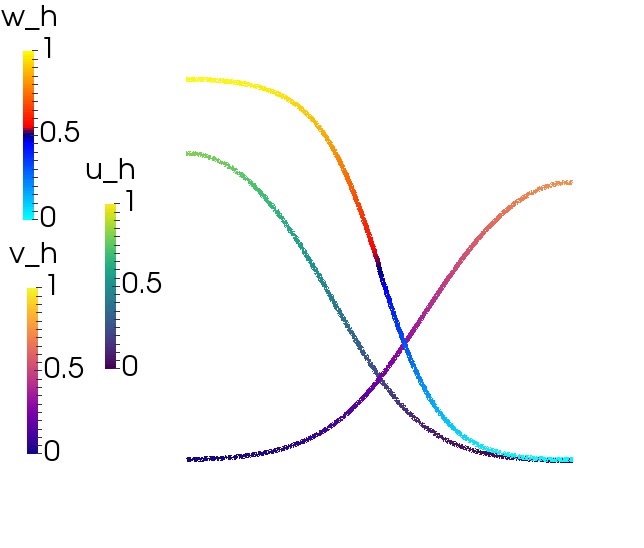}}
\fbox{\includegraphics[trim = 60mm 0mm 20mm 0mm,  clip, width=.15\linewidth]{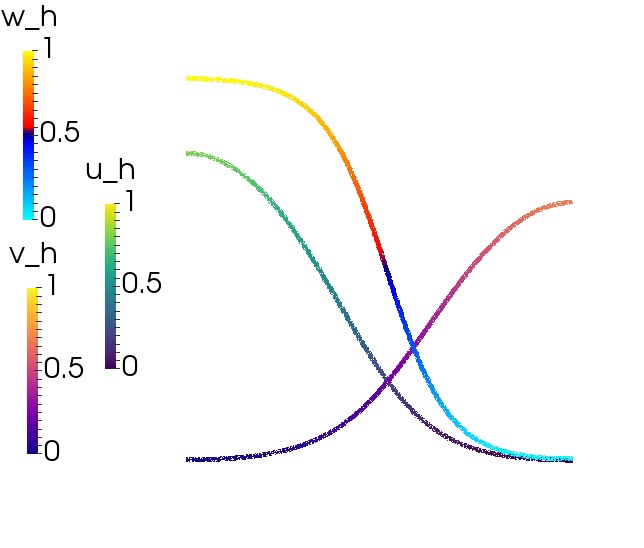}}
\fbox{\includegraphics[trim = 60mm 0mm 20mm 0mm,  clip, width=.15\linewidth]{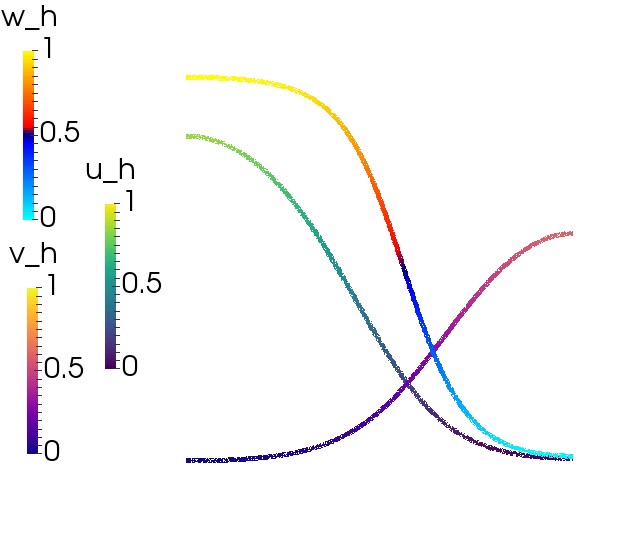}}
\fbox{\includegraphics[trim = 60mm 0mm 20mm 0mm,  clip, width=.15\linewidth]{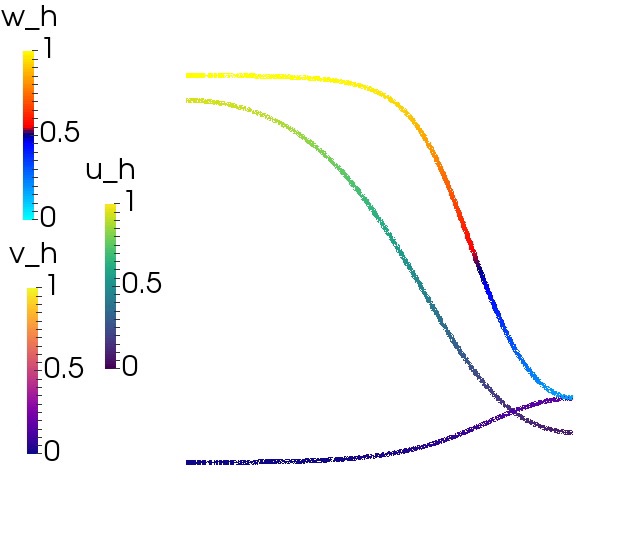}}
\fbox{\includegraphics[trim = 60mm 0mm 20mm 0mm,  clip, width=.15\linewidth]{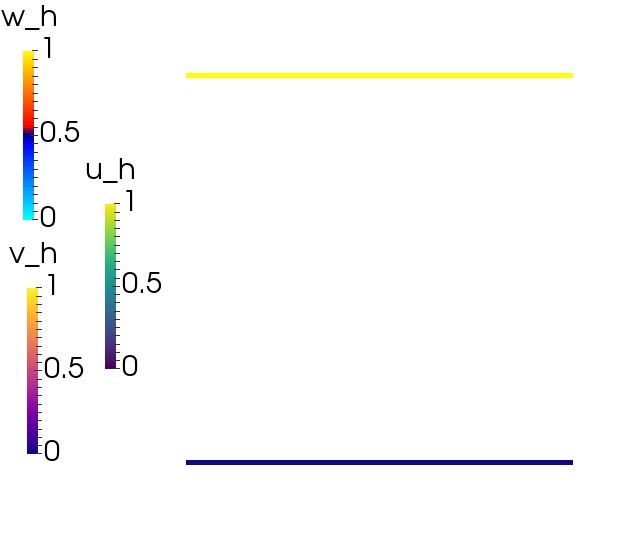}}
{\includegraphics[trim = 0mm 0mm 200mm 0mm,  clip, width=.08\linewidth]{const_1.jpg}}
\fbox{\includegraphics[trim = 60mm 0mm 20mm 0mm,  clip, width=.15\linewidth]{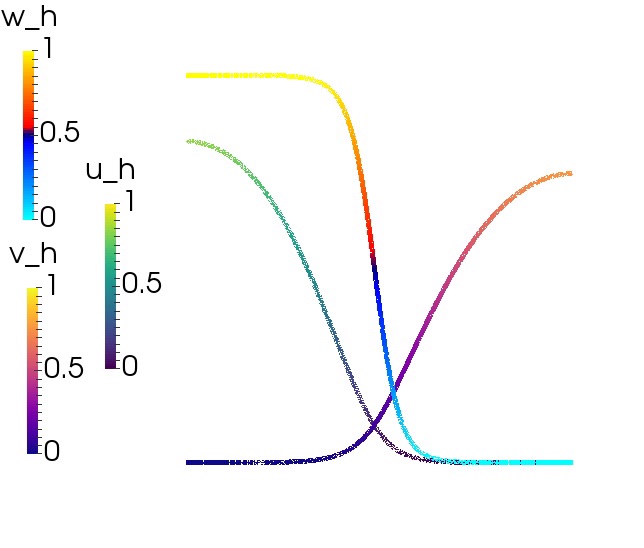}}
\fbox{\includegraphics[trim = 60mm 0mm 20mm 0mm,  clip, width=.15\linewidth]{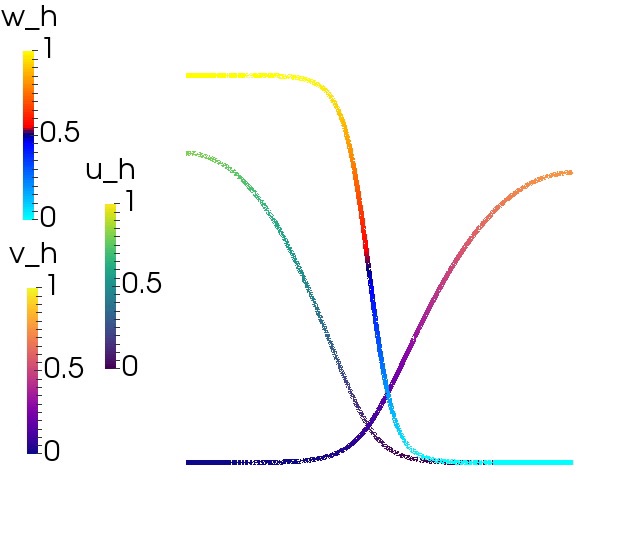}}
\fbox{\includegraphics[trim = 60mm 0mm 20mm 0mm,  clip, width=.15\linewidth]{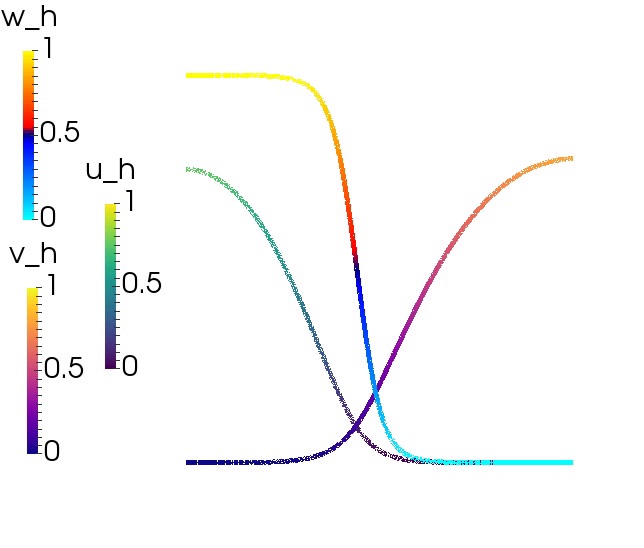}}
\fbox{\includegraphics[trim = 60mm 0mm 20mm 0mm,  clip, width=.15\linewidth]{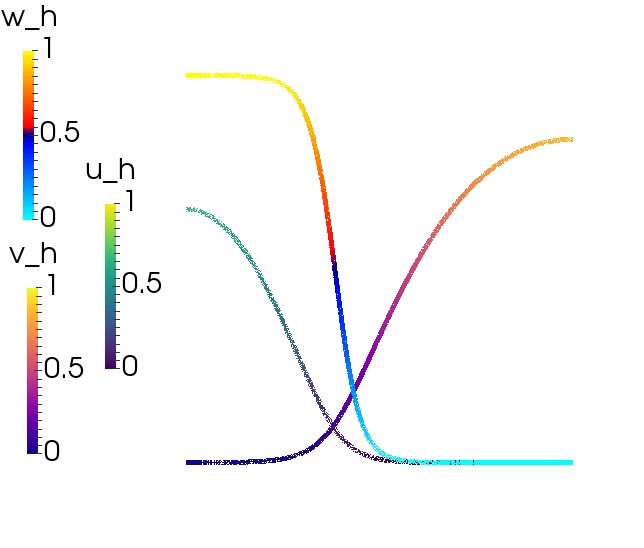}}
\fbox{\includegraphics[trim = 60mm 0mm 20mm 0mm,  clip, width=.15\linewidth]{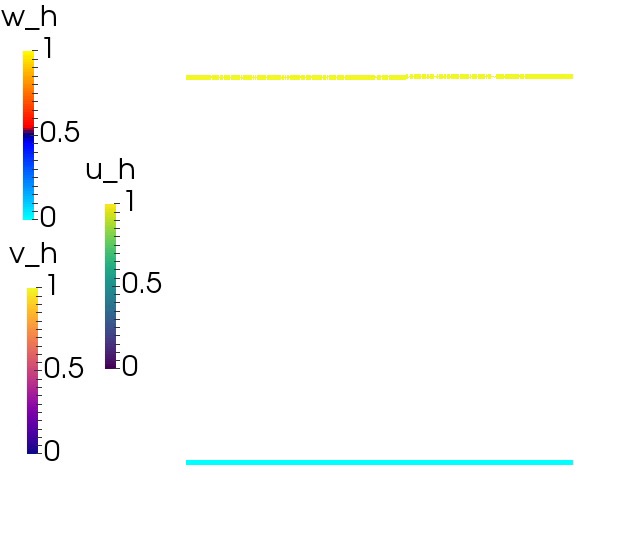}}
{\includegraphics[trim = 0mm 0mm 200mm 0mm,  clip, width=.08\linewidth]{const_1.jpg}}
\fbox{\includegraphics[trim = 60mm 0mm 20mm 0mm,  clip, width=.15\linewidth]{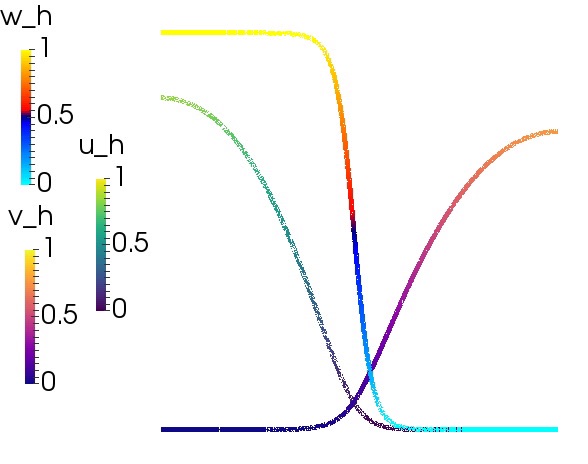}}
\fbox{\includegraphics[trim = 60mm 0mm 20mm 0mm,  clip, width=.15\linewidth]{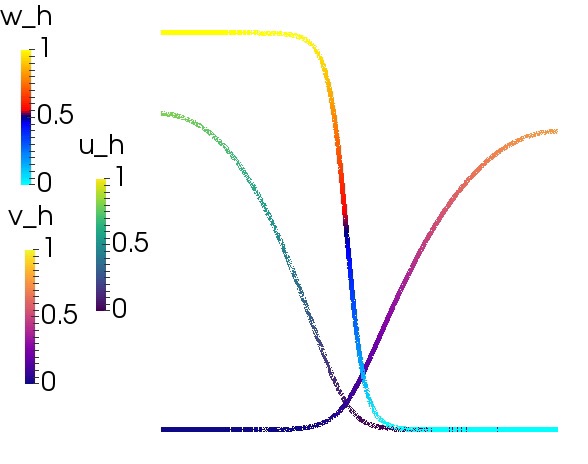}}
\fbox{\includegraphics[trim = 60mm 0mm 20mm 0mm,  clip, width=.15\linewidth]{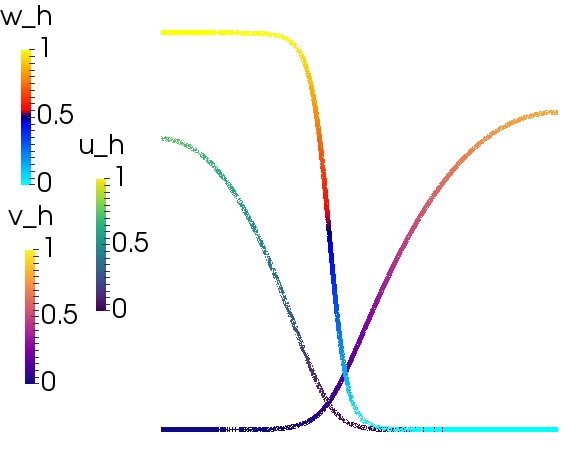}}
\fbox{\includegraphics[trim = 60mm 0mm 20mm 0mm,  clip, width=.15\linewidth]{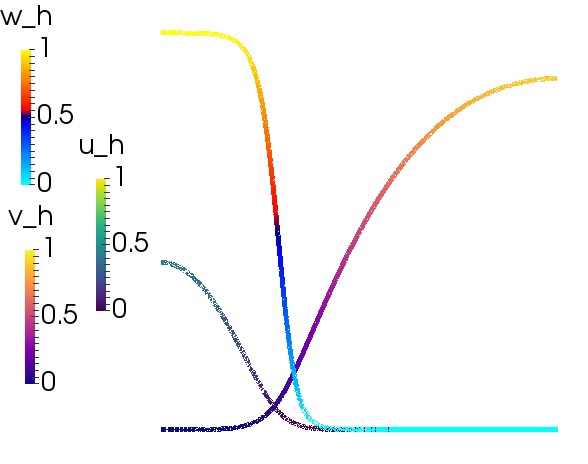}}
\fbox{\includegraphics[trim = 60mm 0mm 20mm 0mm,  clip, width=.15\linewidth]{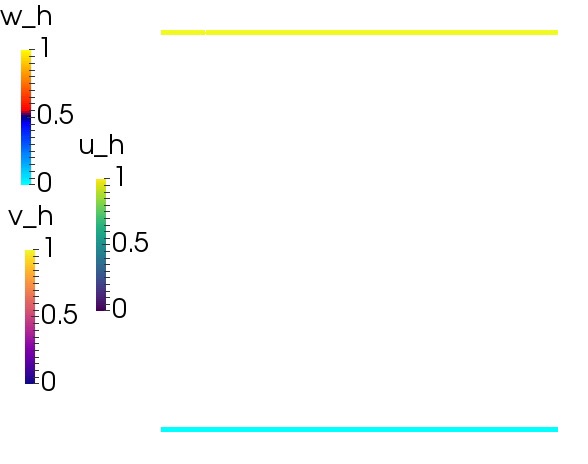}}
\caption{Snapshots of the solution to  \eqref{eq:model-eps-log} with $\eps=10^{-2}$ (top row), $2\times10^{-3}$ (middle row) and $10^{-3}$ (bottom row) at times $0.1, 0.3, 0.4, 0.5$ and $0.6$ reading from left to right in each row. 
The diffusion coefficients are as in \eqref{1d_Diff} and we set $\alpha=1.1$. We observe that despite the competitive advantage , due to the high frequency oscillations, the species $v^\eps$ invades the habitat of species $u^\eps$ for the cases of $\eps=2\times10^{-3}$ and $\eps=10^{-3}$.}
\label{fig:1d_osc}
\end{figure}

\subsection{2D Simulations}\label{ssec:testcase-2d}
Thus far, we treated only the one-dimensional setting. In the example to follow, we consider \eqref{eq:model-eps-log} on a two-dimensional setting with the spatial domain $\Omega:=(0,1)^2$. We fix $\alpha=1.1$, $\lambda=1$ and $r=50$  and start by considering the constant diffusion case, i.e, the periodic diffusivities associated with the species $u^\eps$ and $v^\eps$ are
\[
A(y_1,y_2) =B(y_1,y_2)=2\, \, {\rm Id}.
\]
For the initial conditions we set 
\begin{align*}
u^{\rm in}(x_1,x_2)& =\chi_{\{x_1+0.1\sin(2\pi x_2)<0.5\}}
\\ 
v^{\rm in}(x_1,x_2)& =\chi_{\{x_1+0.1\sin(2\pi x_2)\geq0.5\}}
\\
w^{\rm in}(x)& = u^{\rm in}(x)
\end{align*}
for $x\in\Omega$. For the simulations we employ a uniform mesh with 2099201 degrees of freedom (a fine mesh is needed to resolve the highly oscillatory diffusion coefficient) and a fixed timestep of $10^{-3}$.

Figure \ref{fig:2d_const} shows snapshots of the time evolution in this case. We report on the difference between $u^\eps$ and $v^\eps$ and indicate the zero contour of this difference.   Due to its competitive advantage the species $u^\eps$ invades the habitat of the species $v^\eps$.  The long time behaviour is the complete colonisation of $\Omega$ by the species $u^\eps$ and the extinction of the species  $v^\eps$.
\begin{figure}[htbp]
\fbox{\includegraphics[trim = 0mm 0mm 0mm 0mm,  clip, width=.3\linewidth]{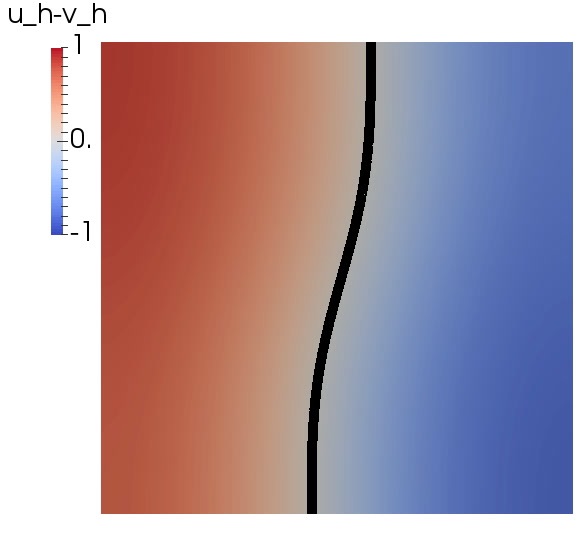}}
\fbox{\includegraphics[trim = 0mm 0mm 0mm 0mm,  clip, width=.3\linewidth]{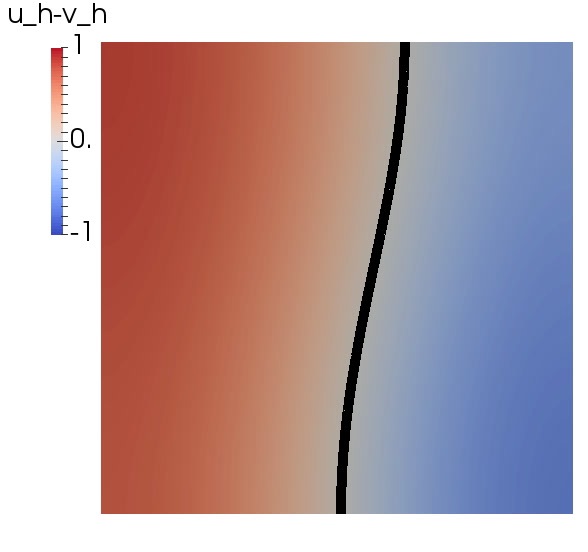}}
\fbox{\includegraphics[trim = 0mm 0mm 0mm 0mm,  clip, width=.3\linewidth]{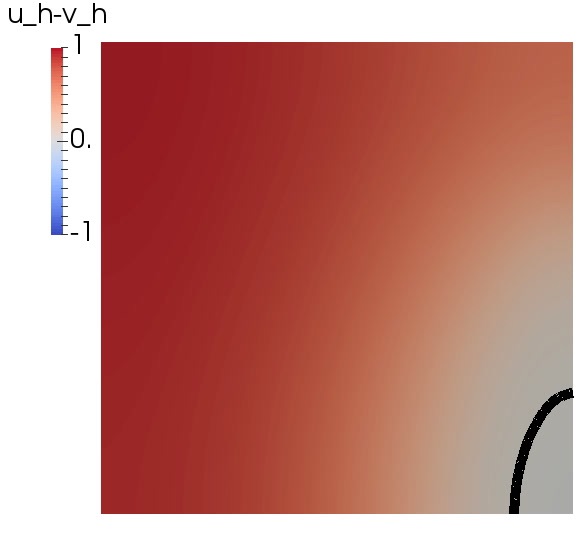}}
\caption{Snapshots of the solution to  \eqref{eq:model-eps-log} with constant diffusivities, $\lambda=1$, $r=50$ and $\alpha=1.1$ at times $0.1, 0.3$ and $0.5$ reading from left to right. We observe that due to the competitive advantage species $u^\eps$ invades the habitat of species $v^\eps$.}
\label{fig:2d_const}
\end{figure}

As above, to illustrate the effect of oscillations, we  now consider the periodic diffusivity associated with $u^\eps$ to be
\begin{align*}
A(y_1,y_2) =
\left(
\begin{matrix}
2 + 1.5\sin(2\pi y_1) & 0
\\[0.2 cm]
0 & 2
\end{matrix}
\right)
\end{align*}
and that associated with the population density $v^\eps$ to be
\begin{align*}
B(y_1,y_2) = 2\, \, {\rm Id}.
\end{align*}
The associated homogenized matrices (using Corollary \ref{cor:2d-conduct-homogen}) are 
\begin{align*}
A_{\rm hom}=
\left(
\begin{matrix}
1.3229 & 0
\\[0.2 cm]
0 & 2
\end{matrix}
\right)\quad \text{ and }\quad
B_{\rm hom} = 2\, \, {\rm Id}.
\end{align*}
Figure \ref{fig:2d_osc} shows snapshots of the time evolution in this case  of oscillating diffusivity with $\eps=2\times10^{-3}$ for one of the species. We see that the behaviour of the system appears to change from the constant diffusion case with the direction of motion of the front between the two species being reversed and the species with constant diffusivity invading the habitat of the species with oscillating diffusivity even though the species with oscillating diffusivity has a competitive advantage over the other.
\begin{figure}[htbp]
\fbox{\includegraphics[trim = 0mm 0mm 0mm 0mm,  clip, width=.3\linewidth]{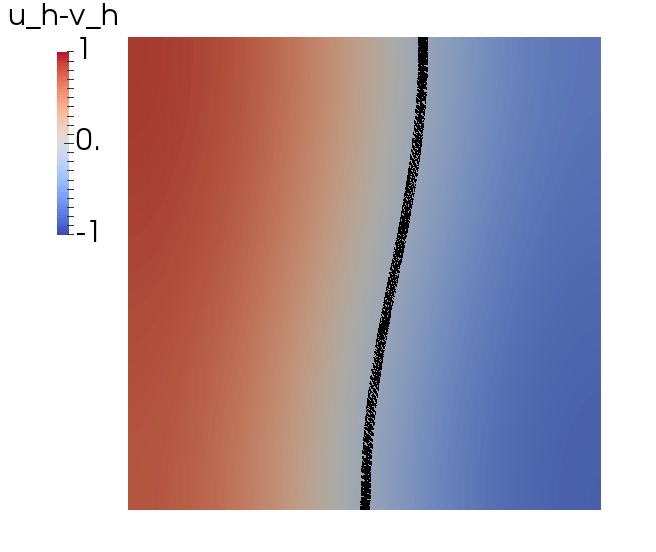}}
\fbox{\includegraphics[trim = 0mm 0mm 0mm 0mm,  clip, width=.3\linewidth]{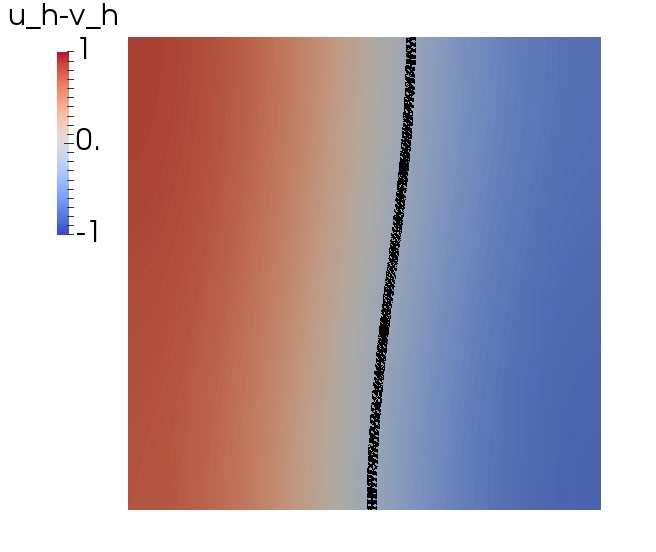}}
\fbox{\includegraphics[trim = 0mm 0mm 0mm 0mm,  clip, width=.3\linewidth]{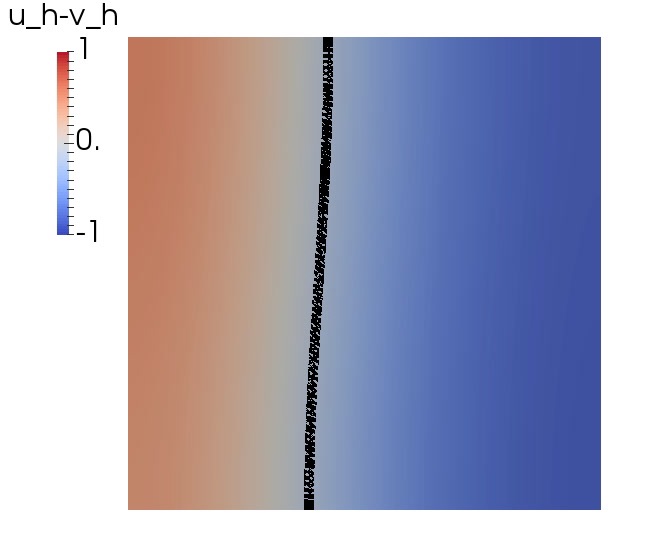}}
\caption{Snapshots of the solution to  \eqref{eq:model-eps-log} with oscillating diffusivity for one species with $\eps=2\times10^{-3}$, $\lambda=1$, $r=50$ and $\alpha=1.1$ at times $0.1, 0.4$ and $1$ reading from left to right. We observe that despite the fact that $u^\eps$ enjoys a competitive advantage over $v^\eps$, due to oscillations in the diffusivity of $u^\eps$, the species $v^\eps$ invades the habitat of species $u^\eps$.}
\label{fig:2d_osc}
\end{figure}

To illustrate this reversal in the invasion behaviour more clearly, in Figure \ref{fig:front}, we plot the position of the front at a series of times in the case of constant diffusivities and in the case of oscillating diffusivity for $u^\eps$ and constant diffusivity for $v^\eps$. The reversal in invasion behaviour is clearly evident.
\begin{figure}
\centering
 \fbox{\includegraphics[trim = 0mm 0mm 0mm 0mm,  clip, width=.45\textwidth]{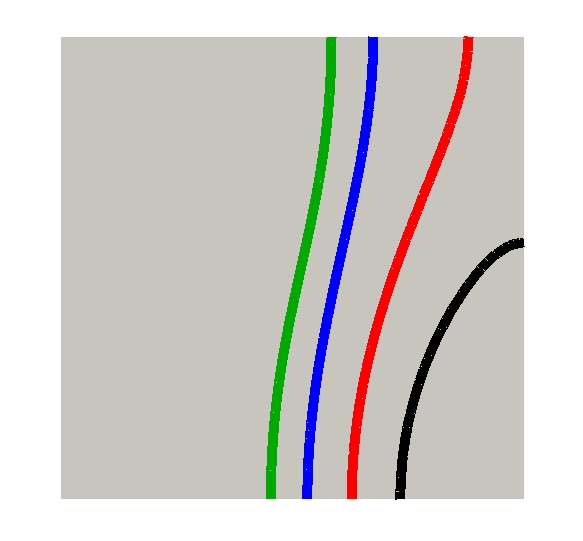}} 
 \hskip 1em
  \fbox{\includegraphics[trim = 0mm 0mm 0mm 0mm,  clip, width=.45\textwidth]{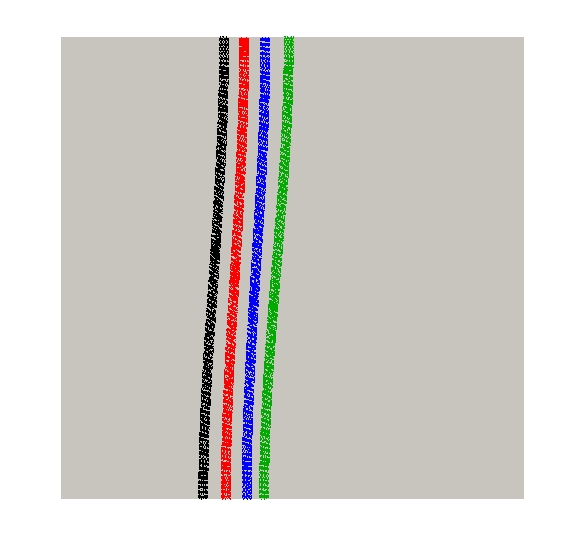}} 
\captionof{figure}{Left: position of the front between the two species habitats ($\{u^\eps=v^\eps\}$),  in the case of constant diffusivities for both species,  at $t=0.1$ (green), $0.3$ (blue), $0.4$ (red) and $0.45$ (black). We observe that the front moves towards the right hand boundary as species $u^\eps$ invades the habitat of $v^\eps$. Right: position of the front between the two species habitats ($\{u^\eps=v^\eps\}$),  in the case of oscillating diffusivity for $u^\eps$ and constant diffusivity for $v^\eps$,  at $t=0.65$ (green), $0.85$ (blue), $0.95$ (red) and $1$ (black). We observe that the direction of motion of the front  is reversed s the species $v^\eps$ invades the habitat of $u^\eps$. }
\label{fig:front}
\end{figure}
\begin{rem}
The aforementioned $2D$ simulations bear some similarity to the simulations in \cite[pp.100--101]{DANCER_1999}. However, the scenario in \cite{DANCER_1999} is to consider isotropic diffusivities in the $\eps$-problem. So, our work is slightly different in that we need to consider anisotropic diffusion and also the work in \cite{DANCER_1999} does not show any simulations to study the $t\gg1$ regime.
\end{rem}

\section{Different scales for diffusion and competition}\label{sec:conclude}
We can take a different view in comparison to the calculations presented thus far. In the competition-diffusion model \eqref{eq:model-eps}, the periodic oscillations in the diffusivities are of frequency $\eps$ and the competition rate is of $\mathcal{O}(\eps^{-1})$. To present quite a general scenario, let $\eps$ denote the characteristic length of the inhomogeneities and let $\delta$ denote a parameter such that the competition rate is of $\mathcal{O}(\delta^{-1})$. As we are considering two parameters, we denote the population densities $u^{\eps,\delta}(t,x)$ and $v^{\eps,\delta}(t,x)$ which satisfy the competition-diffusion model
\begin{equation}\label{eq:model-eps-delta}
\begin{aligned}
\partial_t u^{\eps,\delta} - \nabla \cdot \Big( A^\eps(x) \nabla u^{\eps,\delta} \Big) + \frac{u^{\eps,\delta}}{\delta} \Big( v^{\eps,\delta} + \lambda \left( 1 - w^{\eps,\delta} \right) \Big) & = 0 & \mbox{ in }(0,\ell)\times\Omega,
\\[0.2 cm]
\partial_t v^{\eps,\delta} - \nabla \cdot \Big( B^\eps(x) \nabla v^{\eps,\delta} \Big) + \alpha \frac{v^{\eps,\delta}}{\delta} \Big( u^{\eps,\delta} + \lambda w^{\eps,\delta} \Big) & = 0 & \mbox{ in }(0,\ell)\times\Omega,
\\[0.2 cm]
\partial_t w^{\eps,\delta} + \frac{u^{\eps,\delta}}{\delta} \left( w^{\eps,\delta} - 1 \right) + \frac{w^{\eps,\delta} v^{\eps,\delta}}{\delta} & = 0 & \mbox{ in }(0,\ell)\times\Omega.
\end{aligned}
\end{equation}
The evolution system \eqref{eq:model-eps-delta} for $\left(u^{\eps,\delta},v^{\eps,\delta},w^{\eps,\delta}\right)$ is supplemented by initial and boundary conditions
\begin{equation}\label{eq:model-eps-delta-ibvp}
\begin{aligned}
u^{\eps,\delta}(0,x) = u^{\rm in}(x),\, v^{\eps,\delta}(0,x) = v^{\rm in}(x),\, w^{\eps,\delta}(0,x) = w^{\rm in}(x) & & \mbox{ in }\Omega
\\[0.2 cm]
A^\eps(x) \nabla u^{\eps,\delta} \cdot {\bf n}(x) = B^\eps(x) \nabla v^{\eps,\delta} \cdot {\bf n}(x) & = 0 & \mbox{ on }(0,\ell)\times\partial\Omega.
\end{aligned}
\end{equation}
The calculations presented so far treat the case $\eps=\delta$. In this section, we comment on two other cases: (i) $\eps \ll \delta$ and (ii) $\delta \ll \eps$.\\
In the regime $\eps \ll \delta$, for any fixed $\delta>0$, the $\eps\to0$ limit takes over, i.e., the homogenization limit dominates the competition limit. More precisely, fixing a $\delta>0$, the family $\left(u^{\eps,\delta}, v^{\eps,\delta}, w^{\eps,\delta}\right)$ has a limit point $\left(u^{\ast,\delta}, v^{\ast,\delta}, w^{\ast,\delta}\right)$ satisfying
\begin{equation}\label{eq:model-ast-delta}
\begin{aligned}
\partial_t u^{\ast,\delta} - \nabla \cdot \Big( A_{\rm hom} \nabla u^{\ast,\delta} \Big) + \frac{u^{\ast,\delta}}{\delta} \Big( v^{\ast,\delta} + \lambda \left( 1 - w^{\ast,\delta} \right) \Big) & = 0 & \mbox{ in }(0,\ell)\times\Omega,
\\[0.2 cm]
\partial_t v^{\ast,\delta} - \nabla \cdot \Big( B_{\rm hom} \nabla v^{\ast,\delta} \Big) + \alpha \frac{v^{\ast,\delta}}{\delta} \Big( u^{\ast,\delta} + \lambda w^{\ast,\delta} \Big) & = 0 & \mbox{ in }(0,\ell)\times\Omega,
\\[0.2 cm]
\partial_t w^{\ast,\delta} + \frac{u^{\ast,\delta}}{\delta} \left( w^{\ast,\delta} - 1 \right) + \frac{w^{\ast,\delta} v^{\ast,\delta}}{\delta} & = 0 & \mbox{ in }(0,\ell)\times\Omega.
\end{aligned}
\end{equation}
The homogenization procedure is exactly similar to the one present in the proof of Theorem \ref{thm:homogen}. The main ingredient being the compactness properties of the family $\left(u^{\eps,\delta}, v^{\eps,\delta}, w^{\eps,\delta}\right)$ as given in \eqref{eq:thm-limits}. The limit equation obtained above for $\left(u^{\ast,\delta}, v^{\ast,\delta}, w^{\ast,\delta}\right)$ is treated similar to the one treated in \cite{Hilhorst_2001} in the $\delta\to0$ limit which corresponds to the strong competition limit. Finally, the obtained limit is exactly the same as the limit equation \eqref{eq:thm:limit-wf} as in Theorem \ref{thm:homogen}.\\
In the regime $\delta \ll \eps$, the roles are reversed, i.e., for any fixed $\eps>0$, the $\delta\to0$ limit takes over, i.e., the competition limit dominates homogenization limit. More precisely, fixing a $\eps>0$, the family $\left(u^{\eps,\delta}, v^{\eps,\delta}, w^{\eps,\delta}\right)$ has a limit point $\left(u^{\eps,\ast}, v^{\eps, \ast}, w^{\eps,\ast}\right)$ satisfying a two-phase Stefan problem with oscillating diffusivities $A^\eps(x)$ and $B^\eps(x)$. The homogenization of two-phase and single-phase Stefan-type equations has received a lot of attention. Here we cite a few which can be used to homogenize our limit equation for $\left(u^{\eps,\ast}, v^{\eps, \ast}, w^{\eps,\ast}\right)$. Here are the references: \cite{Rodrigues_1982, Visintin_2007, Kim_2008, Kim_2010}.

In full generality, one could consider the two scaling parameters $\eps$ and $\delta$ to be related via a relation $\delta=\eps^\beta$ with $\beta\in(0,\infty)$. It could be of interest, both from the mathematical and applications perspective, to check the feasibility of arriving a result similar in flavour to Theorem \ref{thm:homogen} for a range of values of the exponent $\beta$. Here we cite \cite{Bardos_2016} where a similar question was addressed in the context of simultaneous diffusion and homogenization approximation for the linear Boltzmann equation from kinetic theory. The present scenario, however, is a bit subtle as our competition-diffusion model is nonlinear and as there is species segregation in the limit problem. This analysis in a more general setting is quite intricate and is left for future investigations.

\appendix

\section{One dimensional and layered materials}\label{sec:one-dim-layer-material}
As an illustrative example, let us consider the particular one-dimensional setting, i.e., when $d=1$ and when the reference periodicity cell $Y:=[0,1)$. Let us denote the periodic one-dimensional conductivity coefficient as $a(y)$ and the associated homogenized coefficient as $a_{\rm hom}$. In this scenario, the cell problem \eqref{eq:conduct-cell-pb} is to solve for a $1$-periodic function $\eta(y)$ such that
\begin{align}\label{eq:1d-cell-pb}
\frac{\rm d}{{\rm d}y} \Big( a(y) \Big( 1 + \frac{{\rm d}\eta(y)}{{\rm d}y} \Big) \Big) = 0 \quad \mbox{ in }(0,1).
\end{align}
Integrating the differential equation \eqref{eq:1d-cell-pb} yields
\begin{align}\label{eq:1d-integrate-cell-pb}
a(y) \Big( 1 + \frac{{\rm d}\eta(y)}{{\rm d}y} \Big) = \mathsf{c}_1
\end{align}
with $\mathsf{c}_1$ being the integrating constant. Integrating \eqref{eq:1d-integrate-cell-pb} over $(0,y)$ yields
\begin{align*}
\Big( y + \eta(y) \Big) = \mathsf{c}_2 + \int\limits_0^y \frac{\mathsf{c}_1}{a(y')}\, {\rm d}y'
\end{align*}
for another integrating constant $\mathsf{c}_2$. Using the periodic-boundary condition on $\eta(y)$, we determine the constant $\mathsf{c}_1$ as
\begin{align*}
\mathsf{c}_1 \int\limits_0^1 \frac{{\rm d}y}{a(y)} = 1.
\end{align*}
The expression for the homogenized coefficient in \eqref{eq:conduct-Ahom} becomes
\begin{align*}
a_{\rm hom} = \int\limits_0^1 a(y) \Big( 1 + \frac{{\rm d}\eta(y)}{{\rm d}y} \Big)\, {\rm d}y.
\end{align*}
The observation \eqref{eq:1d-integrate-cell-pb} straightaway implies that
\begin{align}\label{eq:1d-homogen-coeff}
a_{\rm hom} \int\limits_0^1 \frac{{\rm d}y}{a(y)} = 1,
\end{align}
i.e., the homogenized coefficient $a_{\rm hom}$ is nothing but the harmonic average of the conductivity coefficient $a(y)$ over the period.\\
As the above calculations suggest, one can give an explicit analytical expression for the homogenized coefficient in one-dimensional periodic settings. Under some specific assumptions (such as the layered materials), explicit analytical expressions can be derived for the associated homogenized coefficient (for further details, see for e.g., \cite[Chapter 1]{Allaire_2002}, \cite[Chapter 5]{Cioranescu_1999} \& \cite[Chapter 12]{Pavliotis-Stuart_2008} and references therein). For readers convenience, we gather a result on layered materials in two dimensions (see \cite[pp.~193--195]{Pavliotis-Stuart_2008} for details). 
\begin{cor}\label{cor:2d-conduct-homogen}
Take the periodicity reference cell $Y:=[0,1)^2$ and take the periodic diffusivity $\mathcal{A}(y)$ in Theorem \ref{thm:conduct-homogen} to be
\begin{align*}
\mathcal{A}(y_1,y_2) = 
\left(
\begin{matrix}
\mathcal{A}_{11}(y_1) & \mathcal{A}_{12}(y_1)
\\[0.2 cm]
\mathcal{A}_{21}(y_1) & \mathcal{A}_{22}(y_1)
\end{matrix}
\right).
\end{align*}
The associated homogenized matrix $\mathcal{A}_{\rm hom}$ is given below
\begin{align*}
\left[ \mathcal{A}_{\rm hom}\right]_{11} \int\limits_0^1 \frac{{\rm d}y_1}{\mathcal{A}_{11}(y_1)} = 1;
\qquad
\left[ \mathcal{A}_{\rm hom}\right]_{12} \int\limits_0^1 \frac{{\rm d}y_1}{\mathcal{A}_{11}(y_1)} = \int\limits_0^1 \frac{\mathcal{A}_{12}(y_1)}{\mathcal{A}_{11}(y_1)} \, {\rm d}y_1;
\end{align*}
\begin{align*}
\left[ \mathcal{A}_{\rm hom}\right]_{21} \int\limits_0^1 \frac{{\rm d}y_1}{\mathcal{A}_{11}(y_1)} & = \int\limits_0^1 \frac{\mathcal{A}_{21}(y_1)}{\mathcal{A}_{11}(y_1)} \, {\rm d}y_1;
\end{align*}
\begin{align*}
\left[ \mathcal{A}_{\rm hom}\right]_{22} \int\limits_0^1 \frac{{\rm d}y_1}{\mathcal{A}_{11}(y_1)}  
= & \left( \int\limits_0^1 \frac{\mathcal{A}_{21}(y_1)}{\mathcal{A}_{11}(y_1)} \, {\rm d}y_1\right) \left( \int\limits_0^1 \frac{\mathcal{A}_{12}(y_1)}{\mathcal{A}_{11}(y_1)} \, {\rm d}y_1\right)
\\[0.2 cm]
& + \left( \int\limits_0^1 \frac{{\rm d}y_1}{\mathcal{A}_{11}(y_1)} \right) \left( \int\limits_0^1 \left( \mathcal{A}_{22}(y_1) - \frac{\mathcal{A}_{12}(y_1)\mathcal{A}_{21}(y_1)}{\mathcal{A}_{11}(y_1)}\right)\, {\rm d}y_1 \right).
\end{align*}
\end{cor}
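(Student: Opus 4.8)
The plan is to specialise Theorem \ref{thm:conduct-homogen} to the layered geometry and to exploit the fact that the coefficient matrix $\mathcal{A}$ depends on $y_1$ alone. First I would write down the cell problems \eqref{eq:conduct-cell-pb} for the two correctors $\eta_1$ and $\eta_2$. The crucial simplification is to seek cell solutions that depend only on the variable $y_1$, i.e. $\eta_i = \eta_i(y_1)$. Since the cell problem \eqref{eq:conduct-cell-pb} admits a unique solution in $\mathrm H^1_\#(Y)/\R$, it suffices to exhibit one such $y_1$-dependent solution; by uniqueness it is then automatically \emph{the} corrector. With this ansatz the gradient reduces to $\nabla_y \eta_i = (\eta_i'(y_1), 0)^\top$, so that every $y_2$-derivative in the divergence drops out and the elliptic cell equation collapses to a first-order ordinary differential equation in $y_1$.

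Carrying this out, for $i=1$ one obtains $\partial_{y_1}\big(\mathcal{A}_{11}(y_1)(1+\eta_1'(y_1))\big)=0$, whence $\mathcal{A}_{11}(1+\eta_1') = \mathsf{c}_1$ is constant, while for $i=2$ one finds $\partial_{y_1}\big(\mathcal{A}_{11}(y_1)\eta_2'(y_1)+\mathcal{A}_{12}(y_1)\big)=0$, so that $\mathcal{A}_{11}\eta_2'+\mathcal{A}_{12}=\mathsf{c}_2$ is constant. Here positive-definiteness of $\mathcal{A}$ guarantees $\mathcal{A}_{11}\ge\beta>0$, so division by $\mathcal{A}_{11}$ is legitimate and no symmetry of $\mathcal{A}$ is needed. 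Integrating the two relations over $(0,1)$ and invoking the periodicity of $\eta_i$, which forces $\int_0^1\eta_i'\,{\rm d}y_1 = 0$, pins down the two constants as
\begin{align*}
\mathsf{c}_1 = \left(\int_0^1 \frac{{\rm d}y_1}{\mathcal{A}_{11}(y_1)}\right)^{-1},
\qquad
\mathsf{c}_2 = \left(\int_0^1 \frac{{\rm d}y_1}{\mathcal{A}_{11}(y_1)}\right)^{-1}\int_0^1 \frac{\mathcal{A}_{12}(y_1)}{\mathcal{A}_{11}(y_1)}\,{\rm d}y_1.
\end{align*}

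Finally I would substitute these correctors into the defining formula \eqref{eq:conduct-Ahom} for $\mathcal{A}_{\rm hom}$, computing the vectors $\mathcal{A}(\mathbf{e}_j + \nabla_y \eta_j)$ column by column. Their first components equal exactly the constants $\mathsf{c}_1$ and $\mathsf{c}_2$, which immediately gives the stated expressions for $[\mathcal{A}_{\rm hom}]_{11}$ and $[\mathcal{A}_{\rm hom}]_{12}$ after multiplying through by $\int_0^1 \mathcal{A}_{11}^{-1}$; the entry $[\mathcal{A}_{\rm hom}]_{21}$ follows by integrating the second component $\mathcal{A}_{21}(1+\eta_1') = \mathsf{c}_1\mathcal{A}_{21}/\mathcal{A}_{11}$. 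The only genuinely calculational step is $[\mathcal{A}_{\rm hom}]_{22}$: there one inserts $\eta_2' = (\mathsf{c}_2-\mathcal{A}_{12})/\mathcal{A}_{11}$ into $\int_0^1(\mathcal{A}_{21}\eta_2'+\mathcal{A}_{22})\,{\rm d}y_1$ and regroups, the value of $\mathsf{c}_2$ producing the product of the two cross-term integrals while the remaining integrand contributes the Schur-complement term $\mathcal{A}_{22}-\mathcal{A}_{12}\mathcal{A}_{21}/\mathcal{A}_{11}$. I expect the main, though modest, obstacle to be the conceptual justification of the one-dimensional ansatz through uniqueness of the cell problem, together with the bookkeeping in this last entry; everything else is a direct reduction of the two-dimensional cell equations to scalar ODEs.
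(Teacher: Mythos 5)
Your proposal is correct and is essentially the argument the paper relies on: the paper itself gives no proof of Corollary \ref{cor:2d-conduct-homogen} but defers to \cite[pp.~193--195]{Pavliotis-Stuart_2008}, where the layered-medium formulae are derived by exactly this reduction -- the $y_1$-only ansatz for the correctors (justified by uniqueness of the cell problem \eqref{eq:conduct-cell-pb} in $\mathrm H^1_\#(Y)/\R$), integration of the resulting constant-flux ODEs, and substitution into \eqref{eq:conduct-Ahom} -- mirroring the one-dimensional computation the paper spells out earlier in Appendix \ref{sec:one-dim-layer-material}. Your calculations of $\mathsf{c}_1$, $\mathsf{c}_2$ and of all four entries, including the Schur-complement bookkeeping in $[\mathcal{A}_{\rm hom}]_{22}$, check out, and you correctly note that only ellipticity (giving $\mathcal{A}_{11}\ge\beta>0$), not symmetry, is needed.
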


\bibliographystyle{alpha}\bibliography{biblio}
\end{document}